\newtheorem{theorem}{Theorem}[section]
\newtheorem*{theorem*}{Theorem}
\newtheorem{lemma}[theorem]{Lemma}
\newtheorem{claim}[theorem]{Claim}
\theoremstyle{definition}
\newtheorem{definition}[theorem]{Definition}
\theoremstyle{remark}
\newtheorem{example}[theorem]{Example}
\newcommand{\cf}{\mathcal{F}}
\newcommand{\F}{\mathcal{F}}
\newcommand{\M}{\mathcal{M}}
\newcommand{\conv}{\textrm{conv}}
\newcommand{\supp}{\textrm{supp}}
\title{A Sparse colorful polytopal KKM Theorem}
\author{Daniel McGinnis}\thanks{D. McGinnis: Department of Mathematics, Iowa State University, USA.  \url{dam1@iastate.edu}.} 
\author{Shira Zerbib}\thanks{S. Zerbib: Department of Mathematics, Iowa State University, USA.  \url{zerbib@iastate.edu}. S. Zerbib was supported by NSF grant DMS-1953929.}
\begin{document}

\begin{abstract}
    Recently Sober\'on \cite{soberon2021fair} proved a far-reaching generalization of the colorful KKM Theorem due to Gale \cite{galeequilibrium1984}: let $n\geq k$, and assume that a family of closed sets $(A^i_j\mid i\in [n], j\in [k])$ has the property that for every $I\in \binom{[n]}{n-k+1}$, the family $\big(\bigcup_{i\in I}A^i_1,\dots,\bigcup_{i\in I}A^i_k\big)$ is a KKM cover of the $(k-1)$-dimensional simplex $\Delta^{k-1}$; then there is an injection $\pi:[k] \rightarrow [n]$ so that $\bigcap_{i=1}^k A_i^{\pi(i)}\neq \emptyset$. We prove a polytopal generalization of this result, answering a question of Sober\'on in the same note. We also discuss applications of our theorem to fair division of multiple cakes, $d$-interval piercing, and a generalization of the colorful Carath\'eodory theorem. 
\end{abstract}

\maketitle

\section{Introduction}

The KKM theorem due to Knaster, Kurotowski, and Mazurkiewicz \cite{knaster1929} is a set covering version of Sperner's lemma \cite{spernerslemma1928} and  Brouwer's fixed point theorem. 
It states that if there exists a family of closed subsets $(A_1,\dots,A_k)$ of the $(k-1)$-dimensional simplex 
$\Delta^{k-1} =\conv\{v_1,\dots,v_k\}$ that satisfy $\sigma \subset \bigcup_{v_i \in \sigma} A_i$ for 
every face $\sigma$ of $\Delta^{k-1}$, then $\bigcap_{i=1}^k A_i \neq \emptyset$. 
A family of sets $(A_1,\dots,A_k)$ satisfying the conditions of the KKM theorem is called a {\em KKM cover} of  $\Delta^{k-1}$.

There are many generalizations of the KKM theorem. One such result is the colorful KKM theorem by Gale \cite{galeequilibrium1984}, stating that given $k$ KKM covers $(A^i_1,\dots,A^i_k)$ for  $1\leq i\leq k$, there exists a permutation $\pi: [k] \rightarrow [k]$ so that $\bigcap_{i=1}^k A_i^{\pi(i)}\neq \emptyset$.

Another generalization is the KKMS theorem, due to Shapley \cite{shapleybalanced1973}. A {\em KKMS cover} of $\Delta^{k-1}$  is a collection of closed subsets $(A_\tau \mid \tau \text{ a face of } \Delta^{k-1})$ such that for every face $\sigma$ of $\Delta^{k-1}$ we have  $\sigma\subset \bigcup_{\tau\subset \sigma}A_\tau$. For a face $\sigma$ of $\Delta^{k-1}$, let $b(\sigma)$ denote the barycenter of $\sigma$. The KKMS theorem states that given a KKMS cover of $\Delta^{k-1}$, there exist faces $\tau_1,\dots,\tau_k$ of $\Delta^{k-1}$ so that $b(\Delta^{k-1})\in\conv\{b(\tau_1),\dots,b(\tau_k)\}$ and  $\bigcap_{i=1}^k A_{\tau_i}\neq \emptyset$. Shih and Lee  \cite{shihcombinatorial1993} proved a colorful version of the KKMS theorem.

Komiya proved a generalization of the KKMS Theorem for general polytopes. 

\begin{theorem}[Komiya \cite{Komiyasimple1994}]
\label{thm:komiya}
Let $P$ be a $(k-1)$-dimensional polytope with $p\in P$. Suppose that for every non-empty proper face $\tau$ of $P$ we are given a closed subset $A_\tau$ of $P$ and a point $y_\tau\in \tau$. If $\sigma\subset \bigcup_{\tau\subset \sigma}A_\tau$ for every face $\sigma$ of $P$,  then there exist faces $\tau_1,\dots,\tau_k$ of $P$ such that $p\in \conv\{y_{\tau_1}\dots,y_{\tau_k}\}$ and $\bigcap_{i=1}^k A_{\tau_i}\neq \emptyset$.
\end{theorem}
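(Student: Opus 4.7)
The plan is to construct, for each scale $n \geq 1$, a continuous self-map $f_n$ of $P$ out of the cover and point data that preserves every face, force $f_n$ to hit $p$ by a degree-theoretic surjectivity argument, extract at most $k$ faces by Carath\'eodory, and then take a compactness limit as $n \to \infty$.

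First I would reduce to open covers: let $A_\tau^{(n)}$ be the open $1/n$-neighborhood of $A_\tau$ in $P$, so that the hypothesis $\sigma \subset \bigcup_{\tau \subset \sigma} A_\tau^{(n)}$ persists. I would then build, by induction on face dimension, continuous weights $\lambda_\tau^{(n)} : P \to [0,1]$ indexed by proper faces $\tau$ satisfying (i) $\sum_\tau \lambda_\tau^{(n)} \equiv 1$ on $P$, (ii) $\lambda_\tau^{(n)}(x) > 0$ implies $x \in A_\tau^{(n)}$, and (iii) for every face $\sigma$, $\lambda_\tau^{(n)}(x) = 0$ whenever $x \in \sigma$ and $\tau \not\subset \sigma$. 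The inductive construction uses, at each step, a standard partition of unity subordinate to the cover $\{A_\tau^{(n)} \cap \sigma : \tau \subset \sigma\}$ of $\sigma$ guaranteed by the KKM-type hypothesis, glued to the partitions already built on $\partial \sigma$.

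Define $f_n(x) = \sum_\tau \lambda_\tau^{(n)}(x)\, y_\tau$. Since each $y_\tau \in \tau \subset P$ and $P$ is convex, $f_n$ maps $P$ to $P$, and property (iii) together with $y_\tau \in \tau$ forces $f_n(\sigma) \subset \sigma$ for every face $\sigma$; in particular $f_n$ fixes each vertex. The straight-line homotopy $H(x,t) = (1-t)x + t f_n(x)$ preserves every face by convexity, so $f_n$ is homotopic to $\mathrm{id}_P$ as a self-map of the pair $(P, \partial P)$, hence has relative degree one on $H_{k-1}(P, \partial P) \cong \mathbb{Z}$, and is therefore surjective. Pick $x_n \in P$ with $f_n(x_n) = p$. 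Since $\dim P = k-1$, Carath\'eodory's theorem yields at most $k$ faces $\tau_1^n,\dots,\tau_k^n$ (with repetition if necessary), drawn from the support of $\lambda^{(n)}(x_n)$, such that $p \in \conv\{y_{\tau_1^n},\dots,y_{\tau_k^n}\}$ and $x_n \in A_{\tau_i^n}^{(n)}$ for each $i$.

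The set of faces of $P$ is finite and $P$ is compact, so I can pass to a subsequence on which the tuple $(\tau_1^n,\dots,\tau_k^n)$ is constant, equal to some $(\tau_1,\dots,\tau_k)$, and $x_n \to x^* \in P$. Since each $A_{\tau_i}$ is closed and $d(x_n, A_{\tau_i}) < 1/n$, I obtain $x^* \in \bigcap_{i=1}^k A_{\tau_i}$ together with $p \in \conv\{y_{\tau_1},\dots,y_{\tau_k}\}$, completing the proof. The main technical obstacle is the inductive construction of the face-compatible partition of unity (property (iii) above) while maintaining continuity across faces of differing dimensions; this is standard but requires careful gluing along $\partial \sigma$ at each stage.
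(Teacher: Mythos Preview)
Your proof is correct; it is essentially Komiya's original partition-of-unity argument. The paper does not prove Theorem~\ref{thm:komiya} directly (it is cited as background), but its machinery---Theorem~\ref{sperner-shapley} together with the limiting argument of Section~4---yields an implicit proof that is close in spirit: both build a face-preserving self-map of $P$, use a degree-one surjectivity argument to hit $p$, and pass to a compactness limit. The difference lies in how the map is manufactured. You construct a continuous partition of unity $(\lambda_\tau)$ subordinate to open thickenings of the cover, set $f_n=\sum_\tau \lambda_\tau y_\tau$, and then invoke Carath\'eodory to cut the support down to $k$ faces. The paper instead takes a fine triangulation $T_\varepsilon$, assigns to each vertex $v$ a label $\lambda(v)\subset\supp(v)$ with $v\in A_{\lambda(v)}$, and extends $v\mapsto y_{\lambda(v)}$ affinely over simplices; the preimage of $p$ then lies in a single $(k{-}1)$-simplex of $T_\varepsilon$, whose $k$ vertex labels give the $\tau_i$ directly, with no need for Carath\'eodory. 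Your analytic route is cleaner for the base theorem, while the paper's discrete piecewise-linear route is exactly what makes the colorful and sparse-colorful extensions tractable, since the vertex labeling on a triangulation is the combinatorial data one can manipulate (as in Theorem~\ref{goodtriangulation}).
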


A family of sets $(A_\tau \mid \tau \text{ a face of } P)$ satisfying the condition of Theorem \ref{thm:komiya} is called a {\em Komiya cover} of $P$. Note that the theorem is true also if all the sets $A_\tau$ are open (see e.g. \cite{floriancolorful2019}). 

Frick and Zerbib \cite{floriancolorful2019} proved a colorful extension of Theorem \ref{thm:komiya}.

\begin{theorem}[Frick-Zerbib \cite{floriancolorful2019}]\label{frick-zerbib}
Let $P$ be a $(k-1)$-dimensional polytope with $p\in P$. Suppose for every non-empty proper face $\tau$ of $P$ we are given $k$ closed (open) subsets $A^1_\tau,\dots, A^k_\tau$ of $P$ and $k$ points $y^1_\tau,\dots,y^k_\tau\in \tau$. If for every $i\in [k]$ and every face $\sigma$ of $P$, we have $\sigma\subset \bigcup_{\tau\subset \sigma}A^i_\tau$, then there exist faces $\tau_1,\dots,\tau_k$ of $P$ such that $0\in \conv\{y^1_{\tau_1}\dots,y^k_{\tau_k}\}$ and $\bigcap_{i=1}^k A^i_{\tau_i}\neq \emptyset$.
\end{theorem}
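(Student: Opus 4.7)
The plan is to reduce Theorem~\ref{frick-zerbib} to the non-colorful polytopal KKM (Theorem~\ref{thm:komiya}) via an auxiliary product polytope. As a preliminary step, the standard $\epsilon$-neighborhood and compactness argument lets me assume each $A^i_\tau$ is open. I then work on $Q := P \times \Delta^{k-1}$, a polytope of dimension $2k-2$, using the $k$ vertices of $\Delta^{k-1}$ as color labels, with distinguished point $q := (p, b) \in Q$, where $b$ is the barycenter of $\Delta^{k-1}$.

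For the construction, I assign to each proper face of $Q$ of the form $F = \tau \times \{e_i\}$ (a face of $P$ times a vertex of $\Delta^{k-1}$) the set $B_F := A^i_\tau \times \Delta^{k-1}$ and the test point $z_F := (y^i_\tau, e_i) \in F$, and I set $B_F := \emptyset$ on every other proper face. The Komiya cover condition on $Q$ will follow from the cover condition for each color individually: for any face $G = \tau_0 \times \sigma_0$, any $(x,y) \in G$, and any vertex $e_i \in \sigma_0$, the color-$i$ cover of $P$ yields a face $\tau \subset \tau_0$ with $x \in A^i_\tau$, so that $(x,y) \in B_{\tau \times \{e_i\}}$ and $\tau \times \{e_i\} \subset G$. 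Applying Theorem~\ref{thm:komiya} to $(B_F)$ on $Q$ at the point $q$ then returns $2k-1$ faces $F_j = \tau_j \times \{e_{i_j}\}$ with $q = \sum_j \lambda_j z_{F_j}$, $\lambda_j \geq 0$, $\sum_j \lambda_j = 1$, and $\bigcap_j B_{F_j} \neq \emptyset$. Reading coordinates, the $\Delta^{k-1}$-component forces $b = \sum_j \lambda_j e_{i_j}$, so by linear independence $\sum_{j: i_j = i} \lambda_j = 1/k$ for every $i \in [k]$ and every color must appear; the $P$-component gives $p = \sum_j \lambda_j y^{i_j}_{\tau_j}$; and $\bigcap_j B_{F_j} \neq \emptyset$ yields a common $x^* \in \bigcap_j A^{i_j}_{\tau_j}$.

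The main obstacle will be distilling the $2k-1$ indexed pairs $(i_j, \tau_j)$ into a single rainbow system $(\tau^*_i)_{i \in [k]}$ with $p \in \conv\{y^i_{\tau^*_i}: i \in [k]\}$. The intersection side $x^* \in \bigcap_i A^i_{\tau^*_i}$ comes for free, since $x^*$ lies in every $A^{i_j}_{\tau_j}$, so the real issue is preserving the convex-hull relation when passing to one representative per color. I would attempt this by decomposing $p = \tfrac{1}{k}\sum_i \bar y_i$ with $\bar y_i \in \conv\{y^i_{\tau_j}: i_j = i\}$ and invoking a colorful Carath\'eodory--type selection lemma; a cleaner alternative would be to refine the test-point assignment $z_F$ on $Q$ (for instance, distributing weight among several vertices of $\Delta^{k-1}$ rather than a single $e_i$) so that Komiya directly returns exactly $k$ faces, one per color. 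Making this final selection step precise, rather than setting up the auxiliary cover, is where I expect the real difficulty to lie.
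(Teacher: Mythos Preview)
Your product-polytope reduction is a natural idea, but the gap you flag in the final paragraph is genuine and, as stated, cannot be closed. Knowing $p=\tfrac{1}{k}\sum_i \bar y_i$ with $\bar y_i\in\conv(C_i)$ (where $C_i=\{y^i_{\tau_j}:i_j=i\}$) does \emph{not} imply a rainbow selection $c_i\in C_i$ with $p\in\conv\{c_1,\dots,c_k\}$. A counterexample in $\mathbb R^2$ (so $k=3$): take $C_1=\{(3,0)\}$, $C_2=\{(0,3)\}$, $C_3=\{(-7,1),(1,-7)\}$. Then $p=(0,0)=\tfrac13\bigl((3,0)+(0,3)+(-3,-3)\bigr)$ with $(-3,-3)=\tfrac12(-7,1)+\tfrac12(1,-7)\in\conv(C_3)$, yet for either $c\in C_3$ one coordinate of $c$ is strictly positive, and a short computation shows $(0,0)\notin\conv\{(3,0),(0,3),c\}$. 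So no colorful-Carath\'eodory-type selection lemma of the kind you invoke exists in this generality; the intersection information $x^*\in\bigcap_j A^{i_j}_{\tau_j}$ does not help, since it is automatically inherited by every sub-selection. Your ``cleaner alternative'' of modifying the test points $z_F$ is too vague to evaluate, and in any case Komiya on a $(2k-2)$-dimensional polytope will hand you $2k-1$ faces; nothing in the setup forces them to use each color exactly once.

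The paper takes a completely different route: it works on a triangulation of $P$ itself rather than a product. One assigns to each vertex $v$ of a fine triangulation a color $f(v)\in[k]$, a face $\lambda(v)\subset\supp(v)$ with $v\in A^{f(v)}_{\lambda(v)}$, and the point $y(v)=y^{f(v)}_{\lambda(v)}$; the substantive step (Theorem~\ref{goodtriangulation}) is an edge-subdivision algorithm that refines the triangulation until the colors $f(v)$ are pairwise distinct on every maximal simplex. The degree argument of Theorem~\ref{sperner-shapley} then returns a $(k-1)$-simplex whose $k$ vertices already carry $k$ distinct colors and satisfy $p\in\conv\{y(v)\}$, and a compactness limit finishes. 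The point is that color-injectivity is engineered into the labeling before the topological argument runs, not extracted afterward---precisely the step your product construction cannot supply.
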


Recently, Sober\'on \cite{soberon2021fair} proved a beautiful generalization of the colorful KKM Theorem that we call here the {\em sparse colorful KKM theorem}. Let $n\geq k,m$. A family of closed (open) sets $(A^i_1,\dots, A^i_k \mid i\in [n])$ forms a {\em $m$-weakly KKM cover} of $\Delta^{k-1}$, if for every $I\in \binom{[n]}{n-m+1}$, the sets $\Big(\bigcup_{j\in I}A^j_1,\dots,\bigcup_{j\in I}A^j_k\Big)$ form a KKM cover of $\Delta^{k-1}$. The notion of $m$-weakly KKM cover was defined by Sober\'on in \cite{soberon2021fair}, where he proved:

\begin{theorem}[Sober\'on \cite{soberon2021fair}]\label{soberon}
Let $n\geq k$ be positive integers. Assume the family $(A^i_1,\dots, A^i_k \mid i\in [n])$ forms a $k$-weakly KKM cover of $\Delta^{k-1}$. Then there is an injection $\pi:[k] \rightarrow [n]$ so that $\bigcap_{i=1}^k A_i^{\pi(i)}\neq \emptyset$.
\end{theorem}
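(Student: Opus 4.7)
My strategy is to derive the theorem from Gale's colorful KKM theorem by induction on $n \geq k$, handling the inductive step via a Sperner-type labeling argument. By standard approximation I assume the sets $A^i_j$ are closed.

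In the base case $n = k$, the hypothesis applied to each singleton $I = \{i\}$ (of size $n - k + 1 = 1$) states that $(A^i_1, \dots, A^i_k)$ is itself a KKM cover of $\Delta^{k-1}$ for every $i \in [k]$. Gale's colorful KKM theorem then produces a permutation $\pi: [k] \to [k]$ with $\bigcap_{j=1}^k A^{\pi(j)}_j \neq \emptyset$, which is the required injection.

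For the inductive step $n > k$, my first attempt is a direct reduction by merging two indices: set $\hat{A}^{n-1}_j := A^{n-1}_j \cup A^n_j$ and $\hat{A}^i_j := A^i_j$ for $i < n-1$. The new family over $[n-1]$ inherits the $k$-weakly KKM property whenever the index set $\hat{I}$ contains the merged index, but may fail for $\hat{I} \subseteq [n-2]$, since smaller unions need not be KKM covers. To bypass this, I would work directly with a Sperner labeling on a fine triangulation $T$ of $\Delta^{k-1}$: label each vertex $v \in T$ with a pair $(i(v), j(v)) \in [n] \times [k]$ satisfying $v \in A^{i(v)}_{j(v)}$ and $v_{j(v)}$ lying in the minimal face of $\Delta^{k-1}$ containing $v$. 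Such a label exists because $(\bigcup_{i\in [n]} A^i_j)_{j \in [k]}$ is a KKM cover (it dominates any KKM cover furnished by the hypothesis). Sperner's lemma then produces a rainbow simplex in the $j$-coordinate; refining $T$ and passing to a convergent subsequence yields a point $x \in \Delta^{k-1}$ and indices $i_1, \dots, i_k \in [n]$ with $x \in \bigcap_{j=1}^k A^{i_j}_j$.

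The main obstacle is to force the indices $i_1, \dots, i_k$ to be distinct. A Hall-type argument shows that whenever $x \in \sigma_J := \conv\{v_j : j \in J\}$, the set $N_x(J) := \{i \in [n] : x \in A^i_j \text{ for some } j \in J\}$ has size at least $|J|$: otherwise one could select $I$ of size $n - k + 1$ inside $[n] \setminus N_x(J)$, and the KKM condition for this $I$ on the face $\sigma_J$ would be violated. This is exactly Hall's matching condition, but only on points lying in the face $\sigma_J$; the limit point $x$ may sit in the interior of $\Delta^{k-1}$, where the condition does not directly apply to proper $J$. Overcoming this will require a careful priority rule in the Sperner labeling that encodes the full strength of the hypothesis across all $(n-k+1)$-subsets $I$ — not merely the $I = [n]$ case — so that the limit $x$ fulfills Hall's condition for every $J \subseteq [k]$ simultaneously and yields a system of distinct representatives. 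I expect this distinctness argument to be the most technical part of the proof.
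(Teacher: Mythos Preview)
Your proposal is not a proof: you correctly isolate the crux---forcing the indices $i_1,\dots,i_k$ to be pairwise distinct---but you do not resolve it. The Hall-type observation you make is valid only on the face $\sigma_J$ containing the limit point $x$, and you yourself note that for an interior $x$ it says nothing about proper subsets $J\subsetneq[k]$. Your proposed fix, a ``careful priority rule in the Sperner labeling,'' is not specified, and no argument is given that such a rule exists or that it would force Hall's condition at the limit. As written, the proposal ends exactly where the real work begins.

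The paper handles distinctness in a completely different way, and it is worth comparing. Rather than labeling a fixed triangulation and hoping to repair non-injectivity at the limit, the paper \emph{refines the triangulation itself} so that the index map $f$ is already injective on every simplex before any limit is taken. Concretely: call an edge $uv$ \emph{bad} if $f(u)=f(v)$. Subdivide a bad edge at its midpoint $b$ and use the $k$-weak cover hypothesis (Lemma~\ref{maychoose}) to assign $f(b)$ a value avoiding $f(u)=f(v)$; if this creates new bad edges through $b$, recurse. The key point is that after at most $k-2$ levels of recursion the new vertex is adjacent only to vertices whose $f$-values are already forbidden, so the set of forbidden values has size at most $k-1$ and a fresh value is always available; hence the recursion bottoms out and each application of the algorithm strictly decreases the number of bad edges. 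Once no bad edges remain, every maximal simplex carries $k$ distinct indices, and the standard Sperner--compactness argument finishes. The idea you are missing is precisely this: enforce injectivity combinatorially, on finer and finer triangulations, rather than trying to extract it from a Hall condition at a single limit point.
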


In the same note, Sober\'on  asked whether Theorem \ref{soberon} can be extended to general polytopes, in the same way  that Theorem \ref{frick-zerbib} generalizes the colorful KKMS theorem. In this paper we positively answer this question.  

We first give a  definition of {\em $m$-weakly Komiya cover} for a general polytope $P$. Given a polytope $P$, let $F(P)$ denote the set of non-empty, proper faces of $P$.

\begin{definition}[$m$-weakly Komiya cover] Let $n\ge k,m$ be positive integers, and 
let $P$ be a $(k-1)$-dimensional polytope. A family of closed sets $(A^i_\sigma \mid i\in [n], \sigma \in F(P))$, is called an \textit{$m$-weakly Komiya cover} of $P$ if for every $I\in \binom{[n]}{n-m+1}$, the family $\Big(\bigcup_{j\in I}A^j_\sigma \mid \sigma \in F(P)\Big)$ is a Komiya cover of $P$.
\end{definition}

 We are now ready to state our main theorem.
 
 \begin{theorem}\label{thm:main}
 Let $n\ge k\ge 2$ be integers. Let $P$ be a $(k-1)$-dimensional polytope with $p\in P$. Assume that for every  $\tau\in F(P)$, we are given $n$ points $y_\tau^1,\dots,y_\tau^n\in \tau$. If the family  $(A^i_\tau \mid i\in [n], \tau \in F(P))$  forms a $k$-weakly Komiya cover of $P$, then there exists an injection $\pi: [k]\rightarrow [n]$ and faces $\tau_{1},\dots,\tau_{k}$ such that $p\in  \conv\{y_{\tau_{1}}^{\pi(1)},\dots,y_{\tau_{k}}^{\pi(k)}\}$ and $\bigcap_{i=1}^k A^{\pi(i)}_{\tau_{i}}\neq \emptyset$.
 \end{theorem}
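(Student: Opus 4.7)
The plan is to reduce Theorem~\ref{thm:main} to the colorful polytopal KKM theorem of Frick and Zerbib (Theorem~\ref{frick-zerbib}), in direct analogy with how Sober\'on derives Theorem~\ref{soberon} from the classical colorful KKM theorem. The central idea is to merge the $n$ color classes into $k$ composite color classes that individually form Komiya covers of $P$, apply Frick-Zerbib to these composites, and then use a combinatorial matching argument to extract the injection $\pi$.

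The first step is to translate the $k$-weakly Komiya hypothesis into the pointwise condition: for every $x\in P$ with smallest enclosing face $\sigma_x$, the realized-color set $I(x):=\{i\in[n] : x\in A_\tau^i\text{ for some }\tau\subset\sigma_x\}$ satisfies $|I(x)|\ge k$. Indeed, otherwise $[n]\setminus I(x)$ would be an $(n-k+1)$-subset whose corresponding union cover misses $x$, contradicting the Komiya property for that subset. Next, for each $j\in[k]$ I set $I_j:=\{j,j+1,\dots,j+n-k\}\subseteq[n]$ and define $B_\tau^j:=\bigcup_{i\in I_j}A_\tau^i$; by the $k$-weakly hypothesis each $(B_\tau^j)_{\tau\in F(P)}$ is a Komiya cover. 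The cyclic choice ensures that for every $T\subseteq[k]$ we have $|\bigcup_{j\in T}I_j|\ge n-k+|T|$, so $|\bigcup_{j\in T}I_j\cap I(x)|\ge|T|$ by the elementary bound $|A\cap B|\ge|A|+|B|-n$; Hall's marriage theorem thus produces, for every candidate point $x$, an injection $\pi_x:[k]\to I(x)$ with $\pi_x(j)\in I_j$ and $x\in A_\tau^{\pi_x(j)}$ for suitable faces $\tau\subset\sigma_x$.

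With the composite covers set up, I would apply Theorem~\ref{frick-zerbib} to $(B_\tau^j)$ together with chosen base points $z_\tau^j\in\tau$, obtaining faces $\tau_1,\dots,\tau_k$, a common point $x^\ast\in\bigcap_j B_{\tau_j}^j$, and $p\in\conv\{z_{\tau_j}^j\}_j$. The matching argument applied at $x^\ast$ then delivers an injection $\pi:[k]\to[n]$ with $\pi(j)\in I_j$ and $x^\ast\in A_{\tau_j}^{\pi(j)}$ for every $j$, which is the intersection half of the conclusion.

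The principal difficulty is reconciling $p\in\conv\{z_{\tau_j}^j\}$ with the desired $p\in\conv\{y_{\tau_j}^{\pi(j)}\}_j$: because $\pi$ is only revealed after Frick-Zerbib is run, one cannot simply set $z_\tau^j=y_\tau^{\pi(j)}$ in advance. I anticipate handling this either by (i) running Frick-Zerbib over every candidate injection $\pi:[k]\to[n]$ and passing to a common limit via compactness, or (ii) replacing the composite construction by a direct topological argument---using a partition of unity subordinate to $(A_\tau^i)_{i,\tau}$ and a Brouwer or degree argument on $P$ whose output simultaneously selects the faces $\tau_j$, the injection $\pi$, and the $y_\tau^{\pi(j)}$'s entering the convex combination. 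This synchronization of the Hall matching with the convex-hull assertion is the crux of the proof and the genuine new difficulty in the polytopal case.
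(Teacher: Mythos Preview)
Your proposal correctly isolates the genuine obstacle---synchronizing the injection $\pi$ (which only emerges \emph{after} applying Frick--Zerbib) with the base points $y_\tau^{\pi(j)}$ that must be fed \emph{into} Frick--Zerbib---but it does not overcome it. In fact the difficulty is slightly worse than you indicate. When you apply Theorem~\ref{frick-zerbib} to the composite covers $B^j_\tau=\bigcup_{i\in I_j}A^i_\tau$, the output consists of specific faces $\tau_1,\dots,\tau_k$ and a point $x^*\in\bigcap_j B^j_{\tau_j}$. Your Hall argument, however, is run against the sets $I_j\cap I(x^*)$, where membership in $I(x^*)$ allows \emph{any} supporting face, not the particular $\tau_j$. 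So the matching delivers $\pi(j)\in I_j$ with $x^*\in A^{\pi(j)}_{\tau'_j}$ for possibly different faces $\tau'_j$; you then have $p\in\conv\{z^j_{\tau_j}\}$ for the old faces and $x^*\in\bigcap_j A^{\pi(j)}_{\tau'_j}$ for the new ones, and no link between the two collections.

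Your proposed remedies do not close the gap. Fix (i)---running Frick--Zerbib once for each candidate $\pi$ with $z^j_\tau=y^{\pi(j)}_\tau$---still only yields, for each such $\pi$, a point $x^*_\pi\in\bigcap_j B^j_{\tau_j}$; the Hall matching at $x^*_\pi$ will in general return a different injection $\pi'\neq\pi$, and nothing forces a fixed point $\pi'=\pi$. A compactness limit does not help, since the finitely many injections are discrete and there is no continuity to exploit. Fix (ii) is not a proof but a declaration that some other argument is needed.

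The paper sidesteps this chicken-and-egg problem by not reducing to Theorem~\ref{frick-zerbib} at all. Instead it proves a combinatorial refinement lemma (Theorem~\ref{goodtriangulation}): given any triangulation $T$ of $P$, one can refine it to $T'$ and assign to each vertex $v$ an index $f(v)\in[n]$, a face $\lambda(v)\subset\supp(v)$, and the point $y(v)=y_{\lambda(v)}^{f(v)}$, so that $v\in A^{f(v)}_{\lambda(v)}$ and $f$ is \emph{injective on every simplex} of $T'$. The refinement is built by an explicit edge-subdivision algorithm that eliminates ``bad'' edges (edges whose endpoints share an $f$-label) one at a time, using Lemma~\ref{maychoose} at each new vertex to pick a label avoiding at most $k-1$ forbidden colors. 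Once such a $T'$ exists, the elementary degree argument of Theorem~\ref{sperner-shapley} produces a simplex $\sigma$ of $T'$ with $p\in\conv\{y(v):v\in\sigma\}$; the injectivity of $f$ on $\sigma$ \emph{is} the injection $\pi$, and because $\lambda$, $f$, $y$ were all assigned simultaneously at each vertex, the convex-hull and intersection conclusions automatically refer to the same faces and indices. Letting the mesh of $T$ tend to zero and passing to a limit completes the proof. This simultaneous vertex-by-vertex assignment is precisely what your black-box reduction cannot achieve.
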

 
 As in the case of the previous theorems, Theorem \ref{thm:main} is true also if all the sets $A^i_\tau$ are open.
 
Let us discuss applications of Theorem \ref{thm:main}. 
The first is a {\em sparse colorful piercing theorem} for {\em hypergraphs of $d$-intervals}. A \textit{$d$-interval} is union of $d$ closed intervals in $\mathbb{R}$. A \textit{separated $d$-interval} is $d$-interval consisting $d$ disjoint interval components $h^1,\cdots, h^d$ such that $h^{i+1}\subset (i,i+1)$ for $0\leq i\leq d-1$. A \textit{hypergraph of (separated) $d$-intervals} is a hypergraph $H$ whose vertex set is $\mathbb{R}$ and whose edge set is a finite family of (separated) $d$-intervals.

A \textit{matching} in a hypergraph $H$ is a set of disjoint edges. The \textit{matching number} of $H$, which we denote as $\nu(H)$, is the maximum size of a matching. A \textit{cover} of $H$ is a set of vertices that intersects each edge of $H$, and the \textit{covering number} (or {\em piercing number}) of $H$, $\tau(H)$ is the minimum size of a cover.

Tard\'os \cite{tardos1995} and Kaiser \cite{Kaiser} proved the following result on the ratio between the covering and matching numbers in hypergraphs of $d$-intervals.

\begin{theorem}[Tard\'os\cite{tardos1995}, Kaiser \cite{Kaiser}]\label{tardos-kaiser}
In any hypergraph of $d$-intervals  $H$ we have $\tau(H)\leq (d^2-d+1)\nu(H)$. If $H$ is a hypergraph of separated $d$-intervals, then $\tau(H)\leq (d^2-d)\nu(H)$.
\end{theorem}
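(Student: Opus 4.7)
I plan to derive Theorem \ref{tardos-kaiser} from Theorem \ref{thm:main} by encoding $H$ into an appropriate weakly Komiya cover. Write $\nu := \nu(H)$ and set $n := (d^2-d+1)\nu + 1$ (respectively $(d^2-d)\nu + 1$ in the separated case). Assume toward contradiction that $\tau(H) \geq n$. A standard minimal-cover argument then produces $n$ edges $e_1, \dots, e_n \in H$ and distinct witnesses $s_1, \dots, s_n \in \mathbb{R}$ such that $s_i \in e_i$ while $s_j \notin e_i$ for $j \neq i$. The goal is to extract $\nu+1$ pairwise disjoint edges from among $e_1, \dots, e_n$, contradicting the definition of $\nu$.

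Next, I would work with a polytope $P$ of dimension $k-1$ whose faces parametrize the combinatorial types of $(\nu+1)$-tuples of $d$-intervals --- a natural candidate is a product of simplices such as $(\Delta^{d-1})^{\nu+1}$, or a suitable Cayley-style polytope. For each index $i \in [n]$ and each face $\tau \in F(P)$, I would define a closed set $A^i_\tau \subset P$ consisting of those points whose parametrized configuration is ``compatible'' with the edge $e_i$ occupying the slot prescribed by $\tau$, together with witness points $y^i_\tau \in \tau$ (taken, say, as barycenters). The critical property to verify is that $(A^i_\tau \mid i \in [n], \tau \in F(P))$ is a $k$-weakly Komiya cover of $P$: for every $I \in \binom{[n]}{n-k+1}$, the family $\big(\bigcup_{j \in I} A^j_\tau\big)_{\tau \in F(P)}$ forms a Komiya cover. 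The threshold $(d^2 - d + 1)$ should emerge naturally from bounding, face by face, the number of indices $i$ for which $e_i$ fails to contribute to the KKM covering on that face; this count is tied to the unique-witness property of the $s_i$'s and the combinatorial geometry of $d$-intervals, where $d^2 - d + 1$ is the classical ratio arising for $\nu = 1$.

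Once the weakly Komiya property is verified, Theorem \ref{thm:main} produces an injection $\pi : [k] \to [n]$, faces $\tau_1, \dots, \tau_k$, and a point in $\bigcap_{i=1}^k A^{\pi(i)}_{\tau_i}$ satisfying the convex-position condition $p \in \conv\{y^{\pi(i)}_{\tau_i}\}$. Decoding this intersection through the construction of $A^i_\tau$ should exhibit $\nu+1$ pairwise disjoint edges among $e_{\pi(1)}, \dots, e_{\pi(k)}$, yielding the required contradiction. The main obstacle is the second step: designing $P$, $A^i_\tau$, and $y^i_\tau$ so that (a) the weakly Komiya property holds with exactly the threshold needed to obtain $(d^2-d+1)$ (respectively $(d^2-d)$), and (b) the convex-hull conclusion of Theorem \ref{thm:main} forces combinatorial disjointness of the selected edges rather than some weaker fractional statement. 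The separated case should admit a lower-dimensional polytope (each $\Delta^{d-1}$ replaced by $\Delta^{d-2}$, since each strand is confined to a prescribed interval), which accounts for the tighter $(d^2-d)$ constant.
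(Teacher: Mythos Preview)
First, note that the paper does not give its own proof of Theorem~\ref{tardos-kaiser}; it is quoted as a known result. The proof implicit in the paper is the specialization $n=k$, $\F_1=\cdots=\F_k=H$ of Theorem~\ref{thm:sparse colorful-intervals}, and that argument is structurally very different from yours. There one takes $k=\tau(H)$ and works on the simplex $\Delta^{k-1}$, whose points parametrize the $k-1$ cut points of the unit interval. The cover sets are indexed by \emph{subsets} $T\subset[k]$ (not by edges of $H$): $A_T$ is the set of cut configurations for which some edge of $H$ lies in the union of the intervals indexed by $T$ and meets each of them. Komiya's theorem then yields sets $T_1,\dots,T_k$, each of size at most $d$, whose barycenters contain the barycenter of $\Delta^{k-1}$; this says the auxiliary hypergraph $([k],\{T_1,\dots,T_k\})$ has a perfect fractional matching, hence $\nu^*\ge k/d$ by Lemma~\ref{rank}, and F\"uredi's Theorem~\ref{furedi} converts this to an integer matching of size at least $k/(d^2-d+1)$. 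The constant $d^2-d+1=d(d-1+1/d)$ comes entirely from F\"uredi's bound, not from any face-by-face count.

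Your proposal diverges at every one of these points, and the divergences are genuine gaps rather than alternative routes. The minimal-cover step producing $e_1,\dots,e_n$ with unique witnesses is fine, but after that the plan is essentially undetermined: a polytope ``such as $(\Delta^{d-1})^{\nu+1}$'' does not parametrize anything natural here (points of $\Delta^{k-1}$ encode cut positions, which is what makes the KKM condition verifiable), and the sets $A^i_\tau$ are never actually defined. More seriously, you expect the constant $d^2-d+1$ to ``emerge'' from bounding how many $e_i$ miss a face; there is no such mechanism in any known proof, and the true source of the constant is the fractional-to-integral passage via F\"uredi. Finally, assigning a separate color to each edge $e_i$ and invoking the sparse colorful Theorem~\ref{thm:main} is unnecessary: Tard\'os--Kaiser is a single-family statement, and Komiya's (non-colorful) Theorem~\ref{thm:komiya} already suffices. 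To make the argument work you would have to replace your polytope by $\Delta^{k-1}$ (or $(\Delta^{m-1})^d$ in the separated case), index the cover by faces rather than by edges of $H$, and bring in F\"uredi's theorem for the final step.
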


Matou\v{s}ek \cite{matouseklower2001} constructed examples of hypergraphs of $d$-intervals $H$ such that $\tau(H)=\Omega(\frac{d^2}{\log(d)}\nu(H))$, showing that the bounds from Theorem \ref{tardos-kaiser} are not far from optimal. Aharoni, Kaiser, and Zerbib \cite{AharoniFractional2017} gave an alternative proof of Theorem \ref{tardos-kaiser} using The KKMS theorem and Theorem \ref{thm:komiya}. Frick and Zerbib \cite{floriancolorful2019} applied Theorem \ref{frick-zerbib} in a similar way to prove a colorful generalization of Theorem \ref{tardos-kaiser}. In the following, a colorful matching $\M$ in a family of edges $(\F_1,\dots,\F_k)$ in a hypergraph $H$ is a matching in $H$  in which $|\M\cap \F_i|\leq 1$ for every $i\in [k]$.

\begin{theorem}[Frick-Zerbib \cite{floriancolorful2019}]\label{thm:colorful-intervals} \hfill
\begin{enumerate}
    \item  For  $i\in [k]$, let $\F_i$ be a hypergraph of $d$-intervals. If $\tau(\cf_i)\ge k$ for every $i\in [k]$, then there exists a colorful matching $\M$ of $(\F_1,\dots,\F_k)$  of size $|\M|\geq \frac{k}{d^2-d+1}$.
    \item For  $i\in [(k-1)d+1]$, let $\F_i$ be a hypergraph of separated $d$-intervals. If  $\tau(\F_i)\geq (k-1)d+1$ for every $i\in [k]$, then there exists a colorful matching $\M$ of $(\F_1,\dots,\F_{(k-1)d+1})$ of size $|\M|\geq \frac{k}{d-1}$. 
\end{enumerate}
\end{theorem}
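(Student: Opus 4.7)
My plan is to deduce both parts from the Frick-Zerbib polytopal colorful Komiya theorem (Theorem~\ref{frick-zerbib}) by encoding each hypergraph as a family of closed covers on an auxiliary polytope. I describe part (1) in detail and indicate the modification for part (2) at the end.

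For part (1), I would take $P = \Delta^{k-1}$ with vertices labeled by $[k]$, so each non-empty proper face $\tau$ corresponds to a subset $S_\tau \subsetneq [k]$. Fix an embedding of the vertex labels into $\R$ so that every $x \in P$ encodes a weighted configuration of ``probe points'' on the real line, indexed by the vertices of the minimal face containing $x$. For each color $i \in [k]$ and each $\tau \in F(P)$, define the closed set
\[
A^i_\tau = \{\, x \in P : \exists\, h \in \F_i \text{ meeting the probe configuration of } x \text{ restricted to } S_\tau \,\},
\]
and pick $y^i_\tau \in \tau$ to be a canonical point such as the barycenter of $\tau$, with $p$ the barycenter of $P$.

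Next I would verify, for each fixed $i$, that $(A^i_\tau)_{\tau \in F(P)}$ is a Komiya cover of $P$. If some $x \in \sigma$ evaded every $A^i_\tau$ with $\tau \subset \sigma$, the corresponding $|S_\sigma| \leq k-1$ probe points would form a cover of $\F_i$ of size $<k$, contradicting $\tau(\F_i) \geq k$. Applying Theorem~\ref{frick-zerbib} would then yield faces $\tau_1,\dots,\tau_k$ with $p \in \conv\{y^1_{\tau_1},\dots,y^k_{\tau_k}\}$ and a common point $x^\ast \in \bigcap_{i=1}^k A^i_{\tau_i}$; from each membership $x^\ast \in A^i_{\tau_i}$ I would extract an edge $h_i \in \F_i$ meeting a specific probe point of $x^\ast$. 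A double-counting/greedy argument, modeled on Tard\'os's and Kaiser's proofs of Theorem~\ref{tardos-kaiser}, would then select from $\{h_1,\dots,h_k\}$ a pairwise disjoint colorful subfamily of size at least $k/(d^2-d+1)$: the factor $d^2-d+1$ comes from bounding how many of the other edges a single $h_j$ can block through shared probe points, given that each edge has $d$ interval components.

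For part (2), the analogous argument uses a polytope of dimension $(k-1)d$ with $(k-1)d+1$ color classes, together with an embedding in which the $j$-th probe point is confined to the slot $(j-1,j)$ by the separated structure; this eliminates the cyclic source of the ``$+1$'' in the general bound and yields the sharper ratio $k/(d-1)$. The main obstacle will be designing the probe-point embedding and the covers $A^i_\tau$ so that (a) the Komiya condition translates cleanly into $\tau(\F_i)$ being sufficiently large, and (b) the single-edge-per-color output of Theorem~\ref{frick-zerbib} can be post-processed combinatorially into a colorful matching of the required size.
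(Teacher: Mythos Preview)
Your high-level plan---encode each $\F_i$ as a Komiya cover of a polytope and invoke Theorem~\ref{frick-zerbib}---is the right one, and it is what the paper does. But two essential pieces are missing or garbled.

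First, your description of the sets $A^i_\tau$ is inverted. In the actual construction, a point $x=(x_1,\dots,x_k)\in\Delta^{k-1}$ determines $k-1$ cut points $p_x(j)=x_1+\cdots+x_j$ in $[0,1]$, and $A^i_T$ consists of those $x$ for which some $f\in\F_i$ \emph{avoids every cut point} and lies in the intervals indexed by $T$ (meeting each one). Your phrasing ``$h$ meeting the probe configuration restricted to $S_\tau$'' sounds like the opposite and would not give a Komiya cover. The correct verification is: if no $f\in\F_i$ avoids the cut points of $x$, then those $\le k-1$ points pierce $\F_i$, contradicting $\tau(\F_i)\ge k$. A key consequence of the correct definition is that $A^i_T=\emptyset$ whenever $|T|>d$.

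Second, and this is the real gap, your proposed ``greedy/double-counting'' step discards the conclusion $p\in\conv\{y^i_{\tau_i}\}$, which is where all the combinatorics lives. A greedy selection from $\{h_1,\dots,h_k\}$ alone cannot give the bound: nothing you have written prevents all the sets $T_i$ from sharing a common element, in which case only one $h_i$ can be chosen. What rules this out is precisely the barycenter condition. With $y^i_T=b(\Delta^T)$ and $p=b(\Delta^{k-1})$, the output $b(\Delta^{k-1})\in\conv\{b(\Delta^{T_1}),\dots,b(\Delta^{T_k})\}$ says exactly that the hypergraph $H=([k],\{T_1,\dots,T_k\})$ admits a perfect fractional matching; since each $|T_i|\le d$ this yields $\nu^*(H)\ge k/d$, and then F\"uredi's theorem $\nu(H)\ge \nu^*(H)/(d-1+\tfrac1d)$ gives a matching in $H$ of size $\ge k/(d^2-d+1)$. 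Edges of that matching index pairwise disjoint $h_i$'s, producing the colorful matching. For part~(2) the polytope is $P=(\Delta^{k-1})^d$, the vertices are indexed by $[k]^d$, and the resulting hypergraph $H$ on $[k]\times[d]$ is $d$-partite; the $d$-partite case of F\"uredi's theorem then gives the improved denominator $d-1$. Your sketch never invokes the fractional-matching interpretation of the barycenter condition or F\"uredi's theorem, and without them the argument does not close.
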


Here we use Theorem \ref{thm:main} to obtain a ``sparse colorful" generalization of Theorem \ref{thm:colorful-intervals}. 

\begin{theorem}\label{thm:sparse colorful-intervals}
Let $n,k,m,d$ be positive integers..
\begin{enumerate}
    \item  Let $n\geq k$, and for every $i\in [n]$, let $\F_i$ be a hypergraph of $d$-intervals. If $\tau(\bigcup_{i\in I}\cf_i)\ge k$ for every  $I\in \binom{[n]}{n-k+1}$, then there exists a colorful matching $\M$ of $(\F_1,\dots,\F_n)$ of size $|\M|\geq \frac{k}{d^2-d+1}$.
    \item Let $n\geq (m-1)d+1$, and for every $i\in [n]$, let $\F_i$ be a hypergraph of separated $d$-intervals. Suppose  $\tau(\bigcup_{i\in I} \F_i)\geq (m-1)d+1$ for every $I\in \binom{[n]}{n-(m-1)d}$. Then there exists a colorful matching $\M$ of $(\F_1,\dots,\F_n)$ of size $|\M|\geq \frac{m}{d-1}$. 
\end{enumerate}
\end{theorem}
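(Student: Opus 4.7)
The plan is to proceed by contradiction and reduce both parts to Theorem \ref{thm:main} via a polytopal KKM setup analogous to the one used by Aharoni--Kaiser--Zerbib \cite{AharoniFractional2017} and Frick--Zerbib \cite{floriancolorful2019} in the non-sparse setting. The only essential new ingredient is that the piercing hypothesis on unions of $\F_i$'s must be translated into the $k$-weakly Komiya condition, rather than the ordinary Komiya condition.

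For part (1) I suppose that the maximum colorful matching of $(\F_1,\dots,\F_n)$ has size $\nu^* < k/(d^2-d+1)$, and for part (2) (where I set $k:=(m-1)d+1$) that $\nu^* < m/(d-1)$; in either case I fix a maximum colorful matching $M=\{e_1,\dots,e_{\nu^*}\}$ with $e_j\in\F_{\alpha_j}$ for distinct $\alpha_j\in[n]$. To $M$ I would associate a $(k-1)$-dimensional polytope $P$ whose face lattice records the combinatorial arrangement of the components of $\bigcup M$ on the real line: in part (1), $P$ is a simplex built from these components, and in part (2), $P=(\Delta^{m-1})^d$, with one factor per slot $(i-1,i)$ of the separated $d$-interval geometry. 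For each $i\in[n]$ and $\tau\in F(P)$, the closed set $A^i_\tau$ would consist of those $x\in P$ that are \emph{witnessed} by some edge $f\in\F_i$ whose intersection pattern with $\bigcup M$ agrees with the combinatorial data encoded by $(\tau,x)$, and the points $y^i_\tau\in\tau$ would be chosen using the barycentric labeling already employed in the non-sparse case.

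The central verification is that $(A^i_\tau)$ is a $k$-weakly Komiya cover of $P$. For every $I\in\binom{[n]}{n-k+1}$ and every face $\sigma\subseteq P$, I need that each $x\in\sigma$ lies in $\bigcup_{i\in I}\bigcup_{\tau\subseteq\sigma}A^i_\tau$; the contrapositive is that an uncovered $x$ yields a piercing set for $\bigcup_{i\in I}\F_i$ of size strictly less than $k$, which directly contradicts the hypothesis $\tau(\bigcup_{i\in I}\F_i)\geq k$. This is precisely where the sparse piercing assumption enters, and it is the only place where the present argument departs from \cite{AharoniFractional2017,floriancolorful2019}.

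Theorem \ref{thm:main} then delivers faces $\tau_1,\dots,\tau_k$, an injection $\pi:[k]\to[n]$, a common point $x\in\bigcap_{i=1}^k A^{\pi(i)}_{\tau_i}$, and a barycentric relation among the $y^{\pi(i)}_{\tau_i}$. The membership of $x$ in each $A^{\pi(i)}_{\tau_i}$ produces witness edges $f_i\in\F_{\pi(i)}$ living on $k$ distinct colors, and the polytope/label construction is designed so that the barycentric equation forces enough of the $f_i$'s to be pairwise disjoint to yield a colorful matching of size $\geq k/(d^2-d+1)$ in part (1) and $\geq m/(d-1)$ in part (2), contradicting the maximality of $\nu^*$. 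I expect the main obstacle to be carrying out this final combinatorial extraction, since it is the only point where the one-dimensional arrangement geometry of (separated) $d$-intervals interacts nontrivially with the polytopal structure; however, the analogous extraction in the non-sparse case is worked out in detail in \cite{AharoniFractional2017,floriancolorful2019}, so the same argument should apply essentially verbatim once the $k$-weakly Komiya condition has been verified.
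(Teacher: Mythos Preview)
Your overall strategy is the right one and closely matches the paper's: translate the piercing hypothesis into a $k$-weakly Komiya cover, invoke Theorem~\ref{thm:main}, and then extract a large colorful matching from the barycentric condition. However, the contradiction framework and the fixed maximum matching $M$ are both unnecessary and misleading, and your description of the polytope and of the sets $A^i_\tau$ does not match what is actually needed.

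In the paper's proof (and already in \cite{AharoniFractional2017,floriancolorful2019}) the argument is direct, and the polytope is fixed in advance by the numerical parameters alone: $P=\Delta^{k-1}$ for part~(1) and $P=(\Delta^{m-1})^d$ for part~(2). A point $x\in P$ encodes a configuration of $k-1$ (respectively $d(m-1)$) cut points on the line, and $A^i_T$ consists of those $x$ for which some $f\in\F_i$ avoids all cut points and occupies exactly the cells indexed by $T$. Neither the polytope nor the sets $A^i_T$ depends on any pre-chosen matching $M$; your phrases ``intersection pattern with $\bigcup M$'' and ``simplex built from these components'' do not describe this construction, and are internally inconsistent with your own proposal (a simplex on the at most $d\nu^*<k$ components of $M$ would not be $(k-1)$-dimensional, while for part~(2) you correctly write $P=(\Delta^{m-1})^d$, which has nothing to do with $M$). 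Once the cover is set up in this way, the $k$-weakly Komiya verification you sketch is exactly right: if $x$ on a face $\sigma$ were uncovered for some $I$, the associated $\le k-1$ cut points would pierce $\bigcup_{i\in I}\F_i$, contradicting the hypothesis.

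Finally, the extraction step is not just ``the barycentric equation forces enough disjointness.'' With $y^i_\tau=b(\tau)$, the condition $p\in\conv\{y_{\tau_i}^{\pi(i)}\}$ says precisely that the auxiliary hypergraph $H$ on $[k]$ (respectively on $[m]\times[d]$) with edges $T_1,\dots,T_k$ has a perfect fractional matching; one then applies Lemma~\ref{rank} and F\"uredi's bound $\nu(H)\ge\nu^*(H)/(d-1+1/d)$ (respectively $\nu^*(H)/(d-1)$ in the $d$-partite case) to obtain an integral matching in $H$, which via the common point $x$ translates to a colorful matching in $\F$ of the claimed size. You defer this to \cite{floriancolorful2019}, which is acceptable, but note that this step produces the matching directly---no contradiction with a hypothetical maximum $M$ is involved.
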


Another application of Theorem \ref{thm:main} is to fair division of multiple cakes.  Suppose that there are $n$ participants (players) at a party where $d$ cakes (which we identify with $d$ copies of the $[0,1]$ interval) are served.
Given any partition of cakes into $m$ interval pieces each,  every player can choose their favorite  $d$-tuple of pieces (a $d$-tuple of pieces contains one piece from each cake). The goal is to find such a partition so that every member in a large subset of players receives their favorite $d$-tuple.

We say that the set of players are $(m,d)$-hungry if the following two conditions hold:
\begin{enumerate}
    \item Given any partition of the $d$ cakes into $m$ interval pieces each, and given any set $I$ of $n-d(m-1)$ players, there is a player in $I$ that prefers a $d$-tuple of non-empty pieces. 
    \item The preference sets of the players are closed: if a player prefers some $d$-tuple of pieces in a converging sequence of partitions, they prefer the same $d$-tuple also in the limit partition. 
\end{enumerate}

Sober\'on \cite{soberon2021fair} used Theorem \ref{soberon} to prove the following generalization of the classical fair division theorem due to Stromquist \cite{stromquisthow1980} and Woodall \cite{woodalldividing1980}. 

\begin{theorem}[Sober\'on, \cite{soberon2021fair}]
Let $n\ge k$, and assume that $n$ players are $(k,1)$ hungry. Then there exists a partition of a single cake into $k$ interval pieces where $k$ players get their favorite piece.   
\end{theorem}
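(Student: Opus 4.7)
The plan is to reduce the theorem directly to the sparse colorful KKM theorem (Theorem \ref{soberon}) via the standard identification of partitions of $[0,1]$ into $k$ ordered interval pieces with points of the simplex $\Delta^{k-1}$. Explicitly, send $x=(x_1,\dots,x_k)\in\Delta^{k-1}$ to the partition whose $j$-th piece (reading left to right) has length $x_j$; then the face $\sigma_J=\conv\{v_j : j\in J\}$ of $\Delta^{k-1}$ corresponds precisely to those partitions in which the pieces indexed by $[k]\setminus J$ are empty.

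For each player $i\in[n]$ and each piece $j\in[k]$, I would let $A^i_j\subseteq\Delta^{k-1}$ be the set of partitions in which player $i$'s favorite piece is piece $j$. The closed-preference hypothesis (condition (2) of $(k,1)$-hungriness) ensures that each $A^i_j$ is closed. To verify that the family $(A^i_j \mid i\in[n],\ j\in[k])$ is a $k$-weakly KKM cover, take any $I\in\binom{[n]}{n-k+1}$ and any face $\sigma_J$ of $\Delta^{k-1}$. A point $x\in\sigma_J$ is a partition whose only non-empty pieces are indexed by a subset of $J$, so condition (1) of $(k,1)$-hungriness (with $d=1$, $m=k$, and hence $n-d(m-1)=n-k+1$) produces a player $i\in I$ preferring some non-empty piece $j\in J$. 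Thus $x\in A^i_j\subseteq\bigcup_{i'\in I}A^{i'}_j$, giving $\sigma_J\subseteq\bigcup_{j\in J}\bigcup_{i\in I}A^i_j$, which is exactly the KKM condition for the family $\bigl(\bigcup_{i\in I}A^i_j \mid j\in[k]\bigr)$.

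Applying Theorem \ref{soberon} yields an injection $\pi:[k]\to[n]$ with $\bigcap_{j=1}^k A_j^{\pi(j)}\neq\emptyset$; any partition in this intersection assigns piece $j$ as the favorite of player $\pi(j)$ for every $j\in[k]$, and injectivity of $\pi$ guarantees these are $k$ distinct players each receiving their favorite piece. The argument is essentially a translation, so there is no substantial obstacle; the only point deserving a brief check is that the ``prefers piece $j$'' sets $A^i_j$ are genuinely closed (a limit partition might admit ties, but the set where piece $j$ is \emph{among} a player's favorites is closed by the hypothesis, which is all that is needed).
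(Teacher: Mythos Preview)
Your proposal is correct and follows precisely the approach the paper indicates: the paper does not prove this theorem itself but attributes it to Sober\'on and explicitly notes that he ``used Theorem \ref{soberon} to prove'' it, which is exactly the reduction you carry out via the standard simplex model of cake partitions. The verification of the $k$-weakly KKM condition, the use of closedness of preferences, and the extraction of the injection $\pi$ are all handled correctly.
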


Here we give an extension of this theorem to fair division of multiple cakes. Our theorem is also a generalization of Theorem 2.1 in \cite{NSZ}.

\begin{theorem}\label{thm:cakes}
Let $n,m,d$ be positive integers so that  $n\ge  d(m-1)+1$.  Assume that  $n$ players are $(m,d)$-hungry. Then, there
exists a partition of the $d$ cakes into $m$ interval pieces each, and an allocation of $d$-tuples of  pieces  to a subset $P$ of players of size at least $ \frac{m}{d-1} $, so that each player in $P$ receives one of their favorite $d$-tuple of pieces.
\end{theorem}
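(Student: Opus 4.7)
The plan is to apply Theorem \ref{thm:main} to the polytope of partitions of the $d$ cakes, $P := (\Delta^{m-1})^d$, of dimension $d(m-1)$. Accordingly I set $k := d(m-1)+1$, so the hypothesis $n \geq d(m-1)+1$ matches the requirement $n \geq k$ of Theorem \ref{thm:main}. The vertices of $P$ correspond bijectively to $d$-tuples of pieces $T = (j_1,\dots,j_d) \in [m]^d$; let $v_T$ denote the corresponding vertex. For each player $i \in [n]$ and each vertex $v_T$, define $A^i_{v_T}$ to be the set of partitions $x \in P$ at which every piece of $T$ is non-empty and $T$ is among player $i$'s favorite $d$-tuples; for every non-vertex face $\tau$ set $A^i_\tau := \emptyset$, and put $y^i_{v_T} := v_T$ (taking $y^i_\tau$ arbitrarily in $\tau$ for non-vertex $\tau$). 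The sets $A^i_{v_T}$ are closed by condition~(2) of $(m,d)$-hungriness.

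I next verify the $k$-weakly Komiya cover property. Given $I \in \binom{[n]}{n-k+1} = \binom{[n]}{n-d(m-1)}$, a face $\sigma = \sigma_1 \times \cdots \times \sigma_d$ of $P$, and a partition $x = (x_1,\dots,x_d) \in \sigma$, condition~(1) of $(m,d)$-hungriness applied to $I$ supplies some $j \in I$ preferring a $d$-tuple $T$ of non-empty pieces at $x$; since $T(l) \in \supp(x_l) \subset \sigma_l$ for every $l$, the vertex $v_T$ lies in $\sigma$ and $x \in A^j_{v_T}$, confirming the covering. Applying Theorem \ref{thm:main} with $p := (\tfrac{1}{m}\mathbf{1},\dots,\tfrac{1}{m}\mathbf{1}) \in P$, the barycenter, yields an injection $\pi:[k] \to [n]$, faces $\tau_i$ which must be the vertices $v_{T_i}$ (since the sets $A^{\pi(i)}_{\tau_i}$ are non-empty), and a common partition $x^* \in \bigcap_i A^{\pi(i)}_{v_{T_i}}$. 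Writing $p \in \conv\{v_{T_1},\dots,v_{T_k}\}$ coordinate by coordinate gives coefficients $\lambda_i \geq 0$, $\sum_i \lambda_i = 1$, satisfying $\sum_{i : T_i(l) = j} \lambda_i = \tfrac{1}{m}$ for every $l \in [d]$ and $j \in [m]$.

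The last step is combinatorial: extract at least $\lceil m/(d-1)\rceil$ pairwise disjoint $d$-tuples from $\{T_1,\dots,T_k\}$. Form the $d$-partite $d$-uniform hypergraph $H$ on vertex set $\bigsqcup_{l=1}^d [m]$ with edges $e_i := \{(l,T_i(l)) : l \in [d]\}$. The rescaled weights $m\lambda_i$ constitute an $m$-regular fractional matching, so $\nu^*(H) = m$; a matching lemma parallel to the extraction used in the proof of Theorem \ref{thm:sparse colorful-intervals}(2) (ultimately a form of the fractional Ryser-type bound $\nu^* \leq (d-1)\nu$ for $d$-partite $d$-uniform hypergraphs) then supplies a matching of size at least $m/(d-1)$. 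The corresponding subset $\cp \subseteq [k]$ satisfies $|\cp| \geq m/(d-1)$ and the $d$-tuples $\{T_i : i \in \cp\}$ are pairwise disjoint; assigning $T_i$ to player $\pi(i)$ for each $i \in \cp$ at the partition $x^*$ delivers the required allocation.

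The main obstacle is this final combinatorial extraction. The matching bound $\nu(H) \geq m/(d-1)$ is classical for $d \leq 3$ (König's and Aharoni's theorems), but for general $d$ a direct argument exploiting the equalities $\sum_{i:T_i(l)=j}\lambda_i = 1/m$ is needed, paralleling the analogous step in the $d$-interval application. Establishing this self-contained matching lemma, rather than invoking a general Ryser-type bound, is the technical crux of the proof.
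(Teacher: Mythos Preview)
Your proof is correct and follows essentially the same route as the paper: model partitions by $P=(\Delta^{m-1})^d$, set $k=d(m-1)+1$, define the preference sets $A^i_{v_T}$ at vertices, verify the $k$-weakly Komiya cover condition from $(m,d)$-hungriness, apply Theorem~\ref{thm:main} at the barycenter, and then extract a large matching in the resulting $d$-partite hypergraph of $d$-tuples. The paper's proof is the same argument, differing only in presentation.

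Your only misstep is in the final paragraph, where you flag the matching extraction as an ``obstacle'' and suggest that the bound $\nu(H)\ge \nu^*(H)/(d-1)$ for $d$-partite $d$-uniform hypergraphs is only known for $d\le 3$. You are conflating this with Ryser's conjecture ($\tau\le(d-1)\nu$), which is indeed open for $d\ge 4$. The \emph{fractional} inequality you need, $\nu\ge \nu^*/(d-1)$, is F\"uredi's theorem and holds for all $d$; it is stated in the paper as Theorem~\ref{furedi} and is exactly what the paper invokes at this step. So there is no technical crux left to establish---once you have $\nu^*(H)\ge m$ (which you derive correctly from the barycenter condition), F\"uredi's theorem immediately gives a matching of size at least $m/(d-1)$, and the proof is complete.
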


A third application of Theorem \ref{thm:main} concerns a generalization of the colorful Carath\'eodory theorem \cite{baranygeneralization1982}. 
The following sparse colorful version of the colorful Carath\'eodory theorem was proven in \cite{HolmsenIntersection2016} and  \cite{soberonRobust2018}.

\begin{theorem}[Holmsen \cite{HolmsenIntersection2016}, Sober\'on \cite{soberonRobust2018}]\label{thm:sparseCaratheodory}
Let $k$ be a positive integer and $n \geq {k-1}$. If $X_1,\dots, X_n$ are finite subsets of $\mathbb{R}^{k-1}$ and for each $I\in \binom{[n]}{n-k+1}$, $0\in \conv(\bigcup_{i\in I}X_i)$, then there exist indices $i_1,\dots,i_k$ and points $x_j\in X_{i_j}$ such that $0\in \conv( \{x_1,\dots,x_k \})$.
\end{theorem}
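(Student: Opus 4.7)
The plan is to apply Theorem \ref{thm:main} to derive Theorem \ref{thm:sparseCaratheodory}, taking the distinguished point $p = 0$. First I will reduce, via a small generic perturbation of the $X_i$ (recoverable at the end by a limit argument leveraging the finitely many possible index choices), to the case that $0 \in \textrm{int}\big(\conv(\bigcup_{i \in I} X_i)\big)$ for every $I \in \binom{[n]}{n-k+1}$ and all points of $\bigcup_i X_i$ are in general position. Then I will choose a $(k-1)$-dimensional polytope $P \subset \mathbb{R}^{k-1}$ containing $0$ in its interior; for concreteness, take $P = \Delta^{k-1}$ embedded with its centroid at the origin.

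Next I will build the cover. For each nonzero $x \in \mathbb{R}^{k-1}$, let $\rho(x) = \lambda_x x$ be the unique intersection of the ray $\{tx : t > 0\}$ with $\partial P$ (so $\lambda_x > 0$), and let $\tau(x) \in F(P)$ be the unique face whose relative interior contains $\rho(x)$. For each $i \in [n]$ and each $\tau \in F(P)$, I set $y^i_\tau \in \tau$ as follows: if some $x \in X_i$ satisfies $\tau(x) \subseteq \overline{\tau}$, pick one such $x$ canonically (by a fixed tie-break) and put $y^i_\tau = \rho(x)$; otherwise set $y^i_\tau$ to the barycenter of $\tau$ as a default that will be ruled out at the end. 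The closed set $A^i_\tau$ will be a neighborhood-thickening of the cone $\{t\rho(x) : x \in X_i,\ \tau(x) \subseteq \overline{\tau},\ t \in [0,1]\}$, chosen so that the sets $A^i_\tau$ are closed subsets of $P$ and so that the face-incidence condition intrinsic to Komiya covers is respected.

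The main technical step will be to verify that $(A^i_\tau)$ is a $k$-weakly Komiya cover. Fix $I \in \binom{[n]}{n-k+1}$. The hypothesis $0 \in \textrm{int}(\conv(\bigcup_{i \in I} X_i))$ implies that the directions $\{x/\|x\| : x \in \bigcup_{i \in I} X_i\}$ positively span $\mathbb{R}^{k-1}$, so the cones they generate cover $P$; combined with the thickening in the definition of $A^i_\tau$, this will ensure that for every face $\sigma$ of $P$ and every $p \in \sigma$, $p$ lies in $A^i_\tau$ for some $i \in I$ and some $\tau \subseteq \sigma$, which is exactly the Komiya cover condition for $I$.

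With the cover verified, Theorem \ref{thm:main} yields an injection $\pi : [k] \to [n]$ and faces $\tau_1, \dots, \tau_k$ with $0 \in \conv\{y^{\pi(j)}_{\tau_j} : j \in [k]\}$ and $\bigcap_{j=1}^k A^{\pi(j)}_{\tau_j} \neq \emptyset$. The non-empty intersection rules out the barycenter default, so $y^{\pi(j)}_{\tau_j} = \lambda_j x_{\pi(j)}$ for some $x_{\pi(j)} \in X_{\pi(j)}$ and $\lambda_j > 0$. Writing $0 = \sum_j \mu_j(\lambda_j x_{\pi(j)})$ with $\mu_j \geq 0$, $\sum_j \mu_j = 1$, and rescaling the positive coefficients $\mu_j \lambda_j$ to sum to one yields $0 \in \conv\{x_{\pi(1)}, \dots, x_{\pi(k)}\}$, proving the theorem. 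The principal obstacle will be the exact definition of the thickened closed sets $A^i_\tau$ so that they form a $k$-weakly Komiya cover simultaneously for every $I \in \binom{[n]}{n-k+1}$: the individual cones do not align with faces of $P$ in general, so the thickening must be calibrated carefully against the face-incidence requirement, and the general-position assumption will be needed to avoid pathological alignments.
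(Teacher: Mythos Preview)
Your overall strategy---deduce Theorem~\ref{thm:sparseCaratheodory} from Theorem~\ref{thm:main}, perturb so that $0$ lies in the interior of each relevant convex hull, and recover the original statement by a compactness/limit argument over finitely many index patterns---matches what the paper intends (the paper omits the proof, pointing to the analogous argument in Frick--Zerbib~\cite{floriancolorful2019}). The final deduction, rescaling the convex combination $0=\sum_j \mu_j(\lambda_j x_{\pi(j)})$ to obtain $0\in\conv\{x_{\pi(1)},\dots,x_{\pi(k)}\}$, is also correct.

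The genuine gap is in the construction of the sets $A^i_\tau$, precisely the step you flag as the ``principal obstacle.'' Take $P=\Delta^{k-1}$ and any vertex $v$. The Komiya condition at $v$ reads $v\in\bigcup_{i\in I}A^i_{\{v\}}$ for every $I\in\binom{[n]}{n-k+1}$. With your definition, $A^i_{\{v\}}$ is a thickening of $\{t\rho(x): x\in X_i,\ \tau(x)\subseteq\{v\},\ t\in[0,1]\}$; but $\tau(x)\subseteq\{v\}$ forces $\rho(x)=v$, i.e.\ $x$ is a positive multiple of $v$. After a generic perturbation no $x$ satisfies this, so $A^i_{\{v\}}=\emptyset$ for every $i$, and the cover condition fails at every vertex. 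More generally, since in general position each $\rho(x)$ lands in the relative interior of a facet, all your sets $A^i_\tau$ with $\dim\tau<k-2$ are empty. No ``thickening'' repairs this: a neighborhood of the empty set is empty. And if instead you artificially enlarge $A^i_{\{v\}}$ to make it nonempty, then the nonempty intersection in the conclusion of Theorem~\ref{thm:main} no longer excludes the barycenter default for $y^{\pi(j)}_{\tau_j}$, and your final step collapses.

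So the cover must be built differently: either choose the polytope $P$ adapted to the point configuration (so that its low-dimensional faces genuinely carry points of the $X_i$), or define the $A^i_\tau$ by a rule that does not hinge on some $\rho(x)$ landing in a prescribed low-dimensional face. The construction in~\cite{floriancolorful2019} does this; radial cones over a fixed simplex do not.
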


In  \cite{floriancolorful2019}, Theorem \ref{frick-zerbib} was used to give a new proof of the colorful Carath\'eodory theorem. Using a similar approach,  Theorem \ref{thm:main} can be used to give another proof of Theorem \ref{thm:sparseCaratheodory}. The proof is essentially that same as the proof in \cite{floriancolorful2019}, so we omit it.

The paper is organized as follows.  Section 2 contains some preliminaries. In Section 3 we prove a theorem concerning the existence of certain triangulations that are needed for the proof of Theorem  \ref{thm:main}, and then in Section 4 we prove Theorem  \ref{thm:main}. Finally, in Section 5 we prove Theorems \ref{thm:sparse colorful-intervals} and \ref{thm:cakes}.

\section{Preliminaries}

All the triangulations considered in this paper are convex, that is, every face is the convex hull of its vertices. 
In order to prove Theorem \ref{thm:main}, we first need a vertex labelling version of Theorem \ref{thm:komiya}. Let $P$ be a polytope and $T$ a triangulation of $P$. For $v\in P$ denote by $\supp(v)$ the {\em support of $v$}, that is, the minimal face of $P$ containing $v$. A \textit{Sperner-Shapley labelling} of $T$ is an assignment $\lambda: V(T) \rightarrow F(P)$ such that $\lambda(v)\subset \supp(v)$ for every  $v\in V(T)$. 

The following  was essentially proved in \cite{floriancolorful2019}, and we give the proof here for completion. 
\begin{theorem}\label{sperner-shapley}
Let $P\subset \mathbb{R}^d$  be a polytope with $p\in P$, and let $T$ be a triangulation of $P$. Let $\lambda: V(T) \rightarrow F(P)$ be a Sperner-Shapley labeling of $T$. Suppose that for every $v\in V(T)$, a point $y(v)\in \lambda(v)$ is assigned.
Then there is a face $\tau$ of $T$ such that $p\in  \conv\{y(v) \mid  v\in V(\tau)\}$.
\end{theorem}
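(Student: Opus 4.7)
The plan is to define a continuous piecewise-linear self-map $f\colon P\to P$ by setting $f(v)=y(v)$ for every $v\in V(T)$ and extending linearly on each simplex of $T$. Any simplex $\tau$ of $T$ with $p\in f(\tau)=\conv\{y(v)\mid v\in V(\tau)\}$ is the required face, so it suffices to show that $p\in f(P)$. The fundamental observation, immediate from $\lambda(v)\subset \supp(v)$, is that $f$ maps each face of $P$ into itself: if a simplex $\mu$ of $T$ is contained in a face $\sigma$ of $P$, then each vertex $v$ of $\mu$ satisfies $y(v)\in \lambda(v)\subset \supp(v)\subset \sigma$, and convexity of $\sigma$ forces $f(\mu)\subset \sigma$.

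Let $\sigma_0$ denote the face of $P$ containing $p$ in its relative interior; since $f$ restricts to a self-map of $\sigma_0$, it is enough to prove $p\in f(\sigma_0)$. The case $\dim\sigma_0=0$ is trivial: $p$ is then a vertex of $P$ and of $T$, with $\lambda(p)=\{p\}$ and $y(p)=p$. Assume $\dim\sigma_0\ge 1$, so $\sigma_0$ is homeomorphic to a closed ball. Applying the face-preservation property to the proper faces of $\sigma_0$, the map $f|_{\sigma_0}$ sends $\partial\sigma_0$ into itself, and the straight-line homotopy $H_t(x)=(1-t)f(x)+tx$ is a homotopy within $\partial\sigma_0$ between $f|_{\partial\sigma_0}$ and $\mathrm{id}_{\partial\sigma_0}$: for any $x$ in a proper face $\eta$ of $\sigma_0$, both $x$ and $f(x)$ lie in $\eta$, so the segment stays in $\eta\subset \partial\sigma_0$.

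Suppose toward contradiction that $p\notin f(\sigma_0)$. Since $p$ is in the relative interior of $\sigma_0$, the radial retraction $r\colon \sigma_0\setminus\{p\}\to \partial\sigma_0$ from $p$ is well-defined, and $g := r\circ f|_{\sigma_0}\colon \sigma_0\to \partial\sigma_0$ is continuous. Its restriction $g|_{\partial\sigma_0}=r\circ f|_{\partial\sigma_0}$ is homotopic to $r\circ \mathrm{id}_{\partial\sigma_0}=\mathrm{id}_{\partial\sigma_0}$ by post-composing $H_t$ with $r$. On the other hand, $g|_{\partial\sigma_0}$ factors through the contractible space $\sigma_0$, hence is null-homotopic in $\partial\sigma_0$. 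This contradicts the fact that the identity map on the sphere $\partial\sigma_0$ is not null-homotopic, completing the argument. The main technical care is verifying that the straight-line homotopy stays within $\partial\sigma_0$, which is precisely where the Sperner-Shapley condition $\lambda(v)\subset \supp(v)$ enters decisively.
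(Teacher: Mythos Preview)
Your proof is correct and follows essentially the same approach as the paper: both define the piecewise-linear map on $P$ by sending each vertex $v$ to $y(v)$, use the Sperner--Shapley condition to show this map preserves every face of $P$, and then invoke a topological argument (the paper phrases it as ``$Y|_{\partial P}$ has degree one, hence $Y$ is surjective'', while you unpack this as a no-retraction argument after restricting to the face $\sigma_0$ carrying $p$ in its relative interior). The restriction to $\sigma_0$ is a nice touch that handles the case $p\in\partial P$ more explicitly than the paper does, but the underlying idea is identical.
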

\begin{proof}
	By the Sperner-Shapley labeling condition we have $y(v)\in \lambda(v) \subset \supp(v)$.
	Extending $y$ linearly onto faces of $T$ defines a continuous map $Y\colon P \to P$, so that $Y(\sigma) \subset \sigma$ for every face $\sigma$ of $P$. This implies that $Y$ is 
	homotopic to the identity on~$\partial P$, and thus $Y|_{\partial P}$ has degree one. Then $Y$ is surjective 
	and we can find a point $x \in P$ such that $Y(x) = p$. Let $\tau$ be a face of $T$ containing~$x$.
	By the definition of $Y$,  we have  $p\in  Y(\tau)=\conv\{y(v) \mid v \in V(\tau)\}$.
\end{proof}

We will use the following simple lemma frequently. 

\begin{lemma}\label{maychoose}
Suppose the conditions of Theorem \ref{thm:main} hold. Let $v\in P$ and $I \subset [n]$. If $|I|\ge n-k+1$ then there exists $i\in I$ and $\tau \subset \supp(v)$ such that $v\in A^i_\tau$.
\end{lemma}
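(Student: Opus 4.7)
The plan is to unpack the definitions and apply the Komiya cover property directly. Since $|I|\ge n-k+1$, we may choose any subset $J\subseteq I$ with $|J|=n-k+1$. Then $J\in \binom{[n]}{n-k+1}$, so by hypothesis the family $\bigl(\bigcup_{j\in J}A^j_\tau \bigm| \tau\in F(P)\bigr)$ is a Komiya cover of $P$.

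Next, I would apply the Komiya covering condition to the face $\sigma=\supp(v)$ of $P$ (this is a face of $P$ regardless of whether $v$ lies in the relative interior of a proper face or in the interior of $P$ itself; since $k\ge 2$, the polytope $P$ has positive dimension, so $\supp(v)$ either belongs to $F(P)$ or equals $P$, and in either case the Komiya cover condition applies to $\sigma$). This yields
\[
v\in \sigma\subset \bigcup_{\substack{\tau\in F(P)\\ \tau\subset \sigma}}\bigcup_{j\in J}A^j_\tau.
\]
Therefore there exist $\tau\in F(P)$ with $\tau\subset \supp(v)$ and $j\in J\subseteq I$ such that $v\in A^j_\tau$, which is exactly the conclusion.

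There is no real obstacle here: the argument is a one-line consequence of the definition of $k$-weakly Komiya cover applied to an arbitrary $(n-k+1)$-subset of $I$, together with the Komiya cover condition at the face $\supp(v)$. The only thing to be slightly careful about is the case $\supp(v)=P$, handled by noting that the Komiya covering inequality is stated for every face $\sigma$ of $P$, including $P$ itself.
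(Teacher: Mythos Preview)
Your proof is correct and follows essentially the same approach as the paper: pick a subset $J\subseteq I$ of size exactly $n-k+1$, invoke the $k$-weakly Komiya cover property to conclude that $\bigl(\bigcup_{j\in J}A^j_\tau\bigr)_{\tau\in F(P)}$ is a Komiya cover, and apply the covering condition at $\sigma=\supp(v)$. Your extra remark about the case $\supp(v)=P$ is a welcome clarification that the paper leaves implicit.
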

\begin{proof}
Let $J$ be a subset of $I$ of size $n-k+1$. Since the sets $A^i_\sigma$ form a $k$-weakly Komiya cover of $P$, the family $(\bigcup_{j\in J} A^j_\sigma \mid \sigma \in F(P))$ forms a Komiya cover of $P$. Therefore, $v\in\supp(v) \subset  \bigcup_{\tau \subset \supp(v)} \left(\bigcup_{i\in J} A^i_\tau\right)$. Hence there is some $\tau\subset \supp(v)$ and $i\in J$ such that $v\in A^i_\tau$.
\end{proof}

\section{Refining a triangulation}

Let $X$ be a triangulation of $P$. Denote by $E(X)$ the one-dimensional faces (or {\em edges}) of $X$.  For  an edge $v_1v_2 \in E(X)$,  
let $b(v_1,v_2)$ denote the barycenter of $v_1v_2$. For $m\ge 2$ we define $b(v_1,\dots,v_{m+1})$ recursively, by $$b(v_1,\dots,v_{m+1})=b(b(v_1,\dots,v_{m}),v_{m+1}).$$ Denote by $X(v_1,v_2)$ the triangulation refining $X$, so that $$V(X(v_1,v_2)) = V(X) \cup \{b(v_1,v_2)\},$$ and  $X(v_1,v_2)$ is obtained by replacing every face of the form $\conv\{v_1,v_2,\dots,v_j\}$ in $X$ that contains  the edge $v_1v_2$ with  two faces $\conv\{b(v_1,v_2),v_2,\dots,v_j\}$ and  $\conv\{b(v_1,v_2),v_1,v_3,\dots,v_j\}$.  

Given a map $f:V(X)\to [n]$, 
an edge $uv\in E(X)$  will be called {\em bad} if $f(u)=f(v)$. Let $B(X)$ denote the set of bad edges of $X$, and for a vertex $v\in V(X)$ define $$B(X; v) = \{u\in V(X) \mid uv \in B(X)\}.$$

In this section we prove that given the conditions of Theorem \ref{thm:main}, and given any triangulation $T$ of $P$, there exists a triangulation $T'$ refining $T$ and satisfying certain nice properties.

\begin{theorem}\label{goodtriangulation}
Suppose the conditions of Theorem \ref{thm:main} hold. Let $T$ be a convex triangulation of $P$. 
Then there exists a refinement $T'$ of $T$ and maps $\lambda: V(T')\to F(P)$, $f:V(T')\to [n]$ and $y:V(T') \to P$ with the following properties: 
\begin{enumerate}
    \item[(P1)] $v\in A^{f(v)}_{\lambda(v)}$ for every $v\in V(T')$, 
    \item[(P2)] $y(v) \in \lambda(v) \subset \supp(v)$ for every $v\in V(T')$,  and
    \item[(P3)] for every face $\tau$ of $T'$, the indices $(f(v),~v\in \tau)$ are pairwise distinct.  
\end{enumerate}
\end{theorem}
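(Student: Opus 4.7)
My plan is to construct $T'$ from $T$ by iterated edge subdivisions, maintaining a labeling that satisfies (P1) and (P2) throughout and progressively eliminating \emph{bad edges}, i.e., edges $uv$ with $f(u)=f(v)$. Since every face of a triangulation is a simplex, so that any two of its vertices are joined by an edge, property (P3) is equivalent to the absence of bad edges. I would begin with the initial labeling: for each $v\in V(T)$, apply Lemma \ref{maychoose} with $I=[n]$ to pick $f(v)\in[n]$ and a face $\lambda(v)\subset\supp(v)$ with $v\in A^{f(v)}_{\lambda(v)}$, then choose any $y(v)\in\lambda(v)$. This gives (P1) and (P2) at the start, leaving us to deal with the bad edges.

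The core refinement step is: given a bad edge $uv$ with $f(u)=f(v)=i$ in the current triangulation $T''$, perform the subdivision $T''\mapsto T''(u,v)$ to introduce the barycenter $w=b(u,v)$, and extend the labeling to $w$. The neighbors of $w$ in $T''(u,v)$ are $u$, $v$, and the link vertices of $uv$ in $T''$; write $F\subset[n]$ for the set consisting of $i$ together with the labels $f(x)$ of those link vertices. I would then apply Lemma \ref{maychoose} to $w$ with $I=[n]\setminus F$ to obtain $f(w)$ and $\lambda(w)\subset\supp(w)$ satisfying $w\in A^{f(w)}_{\lambda(w)}$, and set $y(w)\in\lambda(w)$. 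Provided $|F|\le k-1$, one has $|I|\ge n-k+1$, the lemma applies, and the newly labeled $w$ creates no new bad edges while destroying $uv$, so the total bad-edge count strictly decreases.

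The main obstacle is precisely the case $|F|>k-1$, which occurs when the link of $uv$ in $T''$ carries too many distinct labels. To overcome this, my plan is to precede the elimination of $uv$ by auxiliary subdivisions of other edges in the star of $uv$: inserted barycenters there are assigned labels via Lemma \ref{maychoose} so as to progressively reduce the label diversity seen by the link of $uv$, until the basic elimination step above can be performed. Ensuring termination is the delicate point, since auxiliary subdivisions can temporarily create new bad edges elsewhere; I would control this by introducing a lexicographic monovariant that pairs the global bad-edge count with a local complexity measure attached to bad edges (for instance, one reflecting the size and label diversity of their links), and by verifying that each elementary refinement strictly decreases this quantity.

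Once no bad edges remain, the resulting $T'$ is a refinement of $T$, every vertex was labeled via Lemma \ref{maychoose} (giving (P1) and (P2)), and the absence of bad edges gives (P3). This finishes the construction, and $T'$ is then fed into the proof of Theorem \ref{thm:main} via the Sperner--Shapley machinery of Theorem \ref{sperner-shapley}.
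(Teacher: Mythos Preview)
Your overall framework---iterative edge subdivision, labels assigned via Lemma~\ref{maychoose}, and the reduction of (P3) to the absence of bad edges---matches the paper's. The genuine gap is in your treatment of the case $|F|>k-1$. Your proposed fix, ``auxiliary subdivisions to reduce the label diversity seen by the link of $uv$'', cannot be carried out as described, because Lemma~\ref{maychoose} only guarantees that \emph{some} index $i\in I$ works; it gives you no control over \emph{which} $i$ is returned. Hence you cannot force a newly inserted barycenter to take a label that coincides with one already present in the link, and there is no evident mechanism by which the link's label set shrinks. The ``lexicographic monovariant'' is left entirely unspecified, and without it termination is not established.

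The paper's key idea, which your proposal is missing, is to \emph{drop} the requirement that the new barycenter avoid all neighbor labels. When subdividing the bad edge $v_1v_2$ and then, recursively, each newly created bad edge $b(v_1,\dots,v_{j+1})v_{j+2}$, the paper only forbids the labels $f(v_2),\dots,f(v_{j+2})$ along the current recursion chain---a set of size $j+1\le k-1$, so Lemma~\ref{maychoose} always applies. New bad edges are permitted, but a structural claim shows that every maximal simplex containing $b(v_1,\dots,v_j)$ has the form $\conv\{b(v_1,\dots,v_j),w_2,\dots,w_j,u_{j+1},\dots,u_k\}$ with each $w_i\in\{v_i,b(v_1,\dots,v_{i-1})\}$. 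At depth $j=k$ this forces every neighbor of $b(v_1,\dots,v_k)$ to carry a label already in the forbidden list, so no further bad edges arise and the recursion halts. It is this depth bound, not a monovariant on link diversity, that yields termination; a depth-first bookkeeping then shows the original bad edge is removed without any net increase elsewhere.
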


\begin{proof}
Let $v\in V(T)$.
By Lemma \ref{maychoose} we may choose an index $i \in [n]$ and a face $\tau \subset \supp(v)$ such that $v\in A^i_\tau$. Let $\lambda(v)=\tau$, $f(v)=i$ and  $y(v)=y_\tau^i$.



Let $N=|B(T)|$ be the number of bad edges in $T$. Note that if $N=0$, then properties (P1)-(P3) hold for $T$ with the defined maps $\lambda,f,y$, and we are done. So assume $N\geq 1$, and let $v_1v_2\in B(T)$.

We will now describe an algorithm that receives $T,\lambda,f,y$ and terminates with a refinement $T'$ of $T$ and assignments $\lambda(v) \in F(P)$, $f(v)\in [n]$,  $y(v)\in Y$ for every $v\in V(T')\setminus V(T)$, with the following properties: 
\begin{itemize}
    \item[(i)] properties (P1) and (P2) hold for $T',\lambda,f,y$, 
    \item[(ii)]  $B(T') \subset B(T)$, and
    \item[(iii)] $v_1v_2 \notin E(T')$.
\end{itemize}
Note that (ii) and (iii) implies that $B(T')<B(T)$, and 
 Thus the triangulation $T'$ has at most $N-1$ bad edges. This suffices to prove the theorem.

Throughout the  algorithm, $T_c$ is being updated with finer and finer triangulations. The notation $T_c=T_c(b(v_1,\dots,v_{j+1}),v_{j+2})$ in Step (2) below means that we replace the current $T_c$ with the new refined triangulation $T_c(b(v_1,\dots,v_{j+1}),v_{j+2})$, that we now call $T_c$.
\medskip

{\bf The algorithm. }

Setup:
Set $T_c=T(v_1,v_2)$. Since $|[n]\setminus \{f(v_2)\}|\geq n-k+1$, by Lemma \ref{maychoose}  there exists $i\in [n]\setminus \{f(v_2)\}$ and $\tau \in \supp(b(v_1,v_2))$ so that $b(v_1,v_2)\in A^i_\tau$.
Set $\lambda(b(v_1,v_2))=\tau$, $f(b(v_1,v_2))=i$, and $y(b(v_1,v_2))=y_\tau^i$. Since $y_\tau^i \in \tau \subset \supp(v)$, properties (P1) and (P2) hold for $v=b(v_1,v_2)$.
Set $Q_1=B(T_c;b(v_1,v_2))$  and $Q_j=\emptyset$ for all $j\geq 2$. 

Apply the following procedure:
\begin{enumerate}
    \item If $Q_j=\emptyset$ for all $j\geq 1$,  stop and return $T_c, \lambda, f, y$. Otherwise, 
    \item Let $j$ be the largest index for which $Q_j\neq \emptyset$, and choose a vertex $v\in Q_j$. Set $v_{j+2}=v$,  $T_c=T_c(b(v_1,\dots,v_{j+1}),v_{j+2})$. Remove $v_{j+2}$ from $Q_j$. 
  \item Choose 
    $    i\in [n]\setminus \{f(v_2),f(v_3),\dots,f(v_{j+2})\} $ and $
    \tau \subset \supp(b(v_1,\dots,v_{j+2}))$ such that  $b(v_1,\dots,v_{j+2})\in A^i_\tau$ (we will show that such a choice exists).\\
 Set $\lambda(b(v_1,\dots,v_{j+2}))=\tau$, $f(b(v_1,\dots,v_{j+2}))=i$,  $y({b(v_1,\dots,v_{j+2})})=y_{\tau}^i$. Like before, properties (P1) and (P2) hold for $v=b(v_1,\dots,v_{j+2})$.
    \item Set 
  $Q_{j+1}=B(T_c;b(v_1,\dots,v_{j+2})).$
\item Return to Step (1).
\end{enumerate}


\medskip
The idea of the algorithm is as follows. We want to eliminate the bad edge $v_1v_2$ of $T$ by subdividing it using the new vertex $b(v_1,v_2)$ and refining $T$ to obtain $T(v_1,v_2)$. However, after assigning $f(b(v_1,v_2))$, the current triangulation $T(v_1,v_2)$ may contain bad edges that did not appear in $T$. Every such a bad edge must contain the vertex $b(v_1,v_2)$. So we let $Q_1$ record the vertices $v$ for which $vb(v_1,v_2)$ is a bad edge; these are precisely the bad edges of $T(v_1,v_2)$ that are not bad in $T$. 
More generally, throughout the algorithm  $Q_j$ records the vertices $v$ of the current triangulation $T_c$ such that $vb(v_1,\dots,v_{j+1})$ is a bad edge of $T_c$. At every iteration of the algorithm, every bad edge of $T_c$ that is not bad in $T$ is of the form $vb(v_1,\dots,v_{j+1})$, for some $j\ge 1$. Thus, if the algorithm terminates, i.e. $Q_j=\emptyset$ for all $j$, then the returning triangulation $T_c$ contains only bad edges that were already bad in $T$. Moreover, $T_c$ does not contain the edge $v_1v_2$, which was bad in $T$. Thus, $T_c$  contains at least one less bad edge than $T$.

The algorithm works in a depth-first manner. We find the largest index $j$ such that $Q_j\neq \emptyset$ and we reduce the size of $Q_j$ by subdividing an edge of the form $vb(v_1,\dots,v_{j+1})$. By doing this we may have $Q_{j+1}\neq \emptyset$  in Step (4). The algorithm continues to run until $Q_{j+1}=\emptyset$ (we will show that this always happens), and then we reduce the size of $Q_j$ by subdividing another edge of the form $vb(v_1,\dots,v_{j+1})$. Similarly, we continue until $Q_{j}=\emptyset$, and then we reduce the size of $Q_{j-1}$ by subdividing an edge of the form $vb(v_1,\dots,v_{j})$  (see example \ref{exalgorithm}).
\medskip

Our goal now is to show that: (a) the choice in Step (3) of the algorithm is possible as long as the algorithm runs,  (b) the algorithm stops after finitely many steps, and (c) the returning triangulation satisfies Properties (i)-(iii). This will imply the theorem. 

\begin{claim}
For every $2\leq j\leq k$, each maximal simplex of $T_c$ containing the vertex $b(v_1,\dots,v_{j})$ is of the form 
$$
\conv\{b(v_1,\dots,v_{j}), w_2, \dots, w_{j},u_{j+1},\dots,u_k\},
$$
where 
\begin{itemize}
    \item $w_2\in\{v_1, v_2\}$, 
    \item $w_i \in \{v_i, b(v_1,\dots,v_{i-1})\}$ for every $3\leq i\leq j$,  and
    \item $u_{j+1},\dots,u_k$ are some vertices in $V(T_c)$.
\end{itemize}
 \end{claim}
 \begin{proof}
We proceed by induction on $j$. For $j=2$ the claim is true. Indeed, in the triangulation $T(v_1,v_2)$ each maximal simplex containing $b(v_1,v_2)$ contains $v_1$ or $v_2$. Since $f(b(v_1,v_2))\neq f(v_1), f(v_2)$, the edges $v_1b(v_1,v_2)$ or $v_2b(v_1,v_2)$ are never subdivided at any iteration in the algorithm, so in each triangulation obtained at any iteration of the algorithm, any maximal simplex containing $b(v_1,v_2)$ will still contain $v_1$ or $v_2$. 
Let  $3\leq j\leq k$, and assume the claim is true for $j-1$. 

We first claim that $v_{j}$ is not one of the vertices $v_1,\dots,v_{j-1}$ or $b(v_1,\dots,v_{i})$ for $2\leq i\leq j-1$. This is true because at some iteration in the algorithm, $v_j\in  B(b(v_1,\dots,v_{j-1}))$ and thus $v_jb(v_1,\dots,v_{j-1})$ is a bad edge (and in particular $v_j\neq b(v_1,\dots,v_{j-1})$), and $ub(v_1,\dots,v_{j-1})$ is not a bad edge if $u$ is one of  the vertices $v_1,\dots,v_{j-1}$ or $b(v_1,\dots,v_{i})$ for $2\leq i\leq j-2$; indeed, we have that $f(v_1)=f(v_2)$ and $f(v_{i+1})=f(b(v_1,\dots,v_{i}))$ for $2\leq i\leq j-2$, and by the choice of $f(b(v_1,\dots,v_{i}))$ made in Step (3), $f(b(v_1,\dots,v_{i}))\in [n]\setminus \{f(v_2),f(v_3),\dots,f(v_i)\}$.

By the inductive hypothesis, the maximal simplices of $T_c$ containing the edge $v_{j}b(v_1,\dots,v_{j-1})$ are of the form
\[
\conv\{b(v_1,\dots,v_{j-1}), w_2, \dots, w_{j-1},v_{j},u_{j+1},\dots,u_k\}
\]
where $w_2\in \{v_1, v_2\}$ and for $3\leq i\leq j$, $w_i$ is  equal to $v_i$ or $b(v_1,\dots,v_{i-1})$. Therefore, the maximal simplices of $T_c$ containing $b(v_1,\dots,v_{j})$ are of the form
\[
\conv\{b(v_1,\dots,v_{j}), w_2, \dots, w_{j},u_{j+1},\dots,u_k\}
\]
where $w_2\in \{v_1, v_2\}$ and for $3\leq i\leq j$, $w_i$ is  equal to $v_i$ or $b(v_1,\dots,v_{i-1})$. 
This completes the proof of the claim.
\end{proof}

Applying the claim for $j=k$ we get:

\begin{claim}
\label{b^{k-1}}
Assume that at some iteration of the algorithm, at Step (2) we have $Q_{k-2}\neq \emptyset$ and $k-2$ is the largest index $j$ for which $Q_j\neq \emptyset$. Then in the current triangulation $T_c$, the vertex $b(v_1,\dots,v_k)$ is connected by an edge only to the vertices $v_1,\dots,v_k$ and   $b(v_1,\dots,v_{i-1})$ for $3\leq i\leq k$.
\end{claim}

\begin{claim}\label{Q_k-1}
At any iteration of the algorithm,  $Q_{j}=\emptyset$ for every $j\ge k-1$. In other words, the index $j$ from Step (2) is always at most $k-2$. 
\end{claim}
\begin{proof}
We first prove that $Q_{k-1}=\emptyset$ at every iteration. 
Assume that $v_k$ has been defined in some iteration of the algorithm  in Step (2), and let $T_c$ be the resulting triangulation defined in the same step. By Claim \ref{b^{k-1}}, $b(v_1,\dots,v_k)$ is connected by an edge only to the vertices $v_1,\dots,v_k$ and $b(v_1,\dots,v_{j})$ for $2\leq j\leq k-1$ in $T_c$. By the choice of $f(b(v_1,\dots,v_k))$ from Step (3), we have  $$f(b(v_1,\dots,v_k))\in [n]\setminus \{f(v_2),f(v_3),\dots,f(v_k)\}.$$ Moreover, $f(v_1)=f(v_2)$ and by the setup in Step (2)  $f(b(v_1,\dots,v_{i}))=f(v_{i+1})$ for every $2\leq i \leq k-1$. Therefore, $f(b(v_1,\dots,v_k))\neq f(v)$ for every vertex $v$ that is connected to $b(v_1,\dots,v_k)$ by an edge in $T_c$. It follows that $Q_{k-1}=\emptyset$.

Now, since $Q_{j+1}$ is being changed in Step (4) only if $Q_j\neq \emptyset$ at Step (2) in some iteration of the algorithm, it follows that  $Q_j= \emptyset$ for every $j\ge k$ as well.
\end{proof}

\begin{claim}
The choice of $i,\tau$ in Step (3) of the algorithm is possible as long as the algorithm runs. 
\end{claim}
\begin{proof}
By Claim \ref{Q_k-1}, at any iteration of the algorithm $j\le k-2$, and thus the set $I=[n]\setminus \{f(v_2),f(v_3),\dots,f(v_{j+2})\}$
is of size at least $n-k+1$. Therefore, by Lemma \ref{maychoose} there exists a choice of $i\in I$ and $\tau \in \supp(b(v_1\dots,v_{j+2}))$ so that $b(v_1\dots,v_{j+2})\in A^i_\tau$, as needed in Step (3).
\end{proof}

\begin{claim}\label{Qj empty}
For any $j\ge 1$, at any iteration of the algorithm, either $Q_j=\emptyset$ or there is a later iteration in the algorithm for which  $Q_j = \emptyset$ and the size of $Q_{i}$ for each $1\leq i\leq j-1$ is the same in both iterations.
\end{claim}

\begin{proof}
We proceed by induction on $k-1-j$. 
If $k-1-j\le 0$, then the statement is true by Claim \ref{Q_k-1}. 

Let $1\leq j\leq k-2$.
Assume that in the $i_1$-th iteration in the algorithm, we have  $Q_j\neq \emptyset$. It suffices to show that there is some later iteration for which the size of $Q_j$ is decreased by $1$ and the size of $Q_i$ for $1\leq i\leq j-1$ remains unchanged. By the induction hypothesis, there exists $i_2\ge i_1$  such that in the $i_2$-th iteration of the algorithm, $Q_{j+1}=\emptyset$ and the size of $Q_i$ for each $1\leq i\leq j$ is the same as in the $i_1$-th iteration. 
Similarly, we may find $i_3\ge i_2$ such that in the $i_3$-th iteration of the algorithm, $Q_{j+2}=\emptyset$ and the size of $Q_i$, for every $1\leq i\leq j+1$, is the same as in the $i_2$-th iteration (and in particular,  $Q_{j+1}=\emptyset$). Continuing in this way, we find some $i_{k-1-j}\ge i_1$  such that in the $i_{k-1-j}$-th iteration of the algorithm, $Q_i=\emptyset$ for all $j+1\leq i\leq k-2$ and the size of $Q_i$ for $1\leq i\leq j$ is the same as in the $i_1$-th iteration. It follows by Claim \ref{Q_k-1} that $j$ is the largest index for which $Q_j\neq \emptyset$
Therefore, in Step (2) of the $i_{k-j}$-th iteration, we choose some $v\in Q_j$ and set $Q_j=Q_j\setminus \{v\}$, so the size of $Q_j$ deceases by $1$, and the size of $Q_i$ for $1\leq i\leq j-1$ is unchanged. 
\end{proof}

\begin{claim}\label{emptylater}
If at some iteration of the algorithm $Q_i=\emptyset$ for evey $1\leq i\leq j$, then $Q_j=\emptyset$ in every later iteration of the algorithm.
\end{claim}
\begin{proof}
If $Q_i=\emptyset$ for every $1\leq i\leq j$, then at the next iteration of the algorithm, the index chosen in Step (2) is at least $j+1$. Therefore, it is still the case that $Q_i=\emptyset$ for each $1\leq i\leq j$ in the next iteration. 
\end{proof}

\begin{claim}\label{BadEdges}
At any iteration of the algorithm, every edge of $B(T_c)\setminus B(T)$ is of the form $vb(v_1,\dots,v_j)$, where  $v\in Q_{j-1}$ and $v_j$ has been defined in some iteration of algorithm.
\end{claim}
\begin{proof}
We proceed by induction on the number of iterations. The claim is  true in the $0$-iteration, when $T_c=T(v_1,v_2)$  in the setup. 

Let $T'$ be the triangulation $T_c$ obtained at the $i$-ith iteration of the algorithm for some $i$, and assume the statement holds for $T'$. Let $T''$ be the triangulation obtained in the $(i+1)$-th iteration. Let $j$ be the index in Step (2) of the $(i+1)$-th iteration. Then every bad edge of $E(T'')\setminus E(T')$ is of the form $vb(v_1,\dots,v_{j+2})$, for $v\in Q_{j+1}$. This, combined with the fact that the claim held for $T'$, implies that the claim holds for $T''$. 
\end{proof}

\begin{claim}\label{terminates}
The algorithm terminates after finitely many steps. The returning triangulation $T'$ satisfies properties (i),(ii) and (iii). 
\end{claim}
\begin{proof}
Applying Claim \ref{Qj empty} for $j=1$, we have that there is some iteration for which $Q_1=\emptyset$. Therefore, by Claim \ref{emptylater}, we have that $Q_1=\emptyset$ at  any later iteration of the algorithm. Now, by Claim \ref{Qj empty}, there is a later iteration in the algorithm for which $Q_1=Q_2=\emptyset$. Applying Claim \ref{emptylater} again, we have that $Q_2=\emptyset$ for every later iteration. Continuing  this way, we get that there is an iteration where $Q_j=\emptyset$ for every $j\geq 1$, and therefore the algorithm  terminates.

By Claim \ref{BadEdges}, every edge in  $B(T')\setminus B(T)$ is of the form $vb(v_1,\dots,v_{j+1})$ where $v\in Q_{j}$. When the algorithm terminates, we have that $Q_j=\emptyset$ for all $j\geq 1$, so $T'$ has no such bad edges. Since $T'$ no longer has the edge $v_1v_2$, it follows that $T'$ has at most $N-1$ bad edges. Finally, (i) holds by the the definition of $\lambda, f, y$ throughout the algorithm.
\end{proof}

Claim \ref{terminates} completes the proof of Theorem \ref{goodtriangulation}.
\end{proof}

\begin{example}\label{exalgorithm}
Let $P$ be a 2-dimensional simplex and $n=4$. Assume we have closed sets $(A^i_\sigma \mid i\in [4], \sigma \in F(P))$  satisfying the conditions of Theorem \ref{thm:main}. Let $T$ be  the triangulation of $P$ depicted in Figure \ref{fig:ex}A. Suppose that a map $f:V(T)\to [4]$ is defined.  Every vertex $v\in V(T)$ in the figure is labeled by $v,f(v)$. Then 
 $u_1u_2$ is a bad edge.

 We now apply the algorithm with $v_1=u_1$ and $v_2=u_2$. In the setup of the algorithm we obtain  $T_c=T(u_1,u_2)$, and assign $f(b(v_1,v_2))\in [4]\setminus\{1\}$. Say $f(b(v_1,v_2))=2$. We then obtain the triangulation in  Figure \ref{fig:ex}B.

Proceeding to Step (1) in the algorithm, we find that $Q_1=\{u_3,u_4\}$ is non-empty, and in Step (2), the largest index $j$ such that $Q_j\neq \emptyset$ is $j=1$. We choose $v_3=u_4$,  $T_c=T_c(b(v_1,v_2),v_3)$, and remove $v_3$ from $Q_1$, so now $Q_1=\{u_3\}$.
Proceeding now to Step (3), we find some $i\in [4]\setminus \{f(v_2)=1,f(v_3)=2\}$ and $\sigma\subset \supp(b(v_1,v_2,v_3))$ such that $b(v_1,v_2,v_3)\in A^i_\sigma$, and set $f(b(v_1,v_2,v_3))=i$. Say  $i=3$. 
We obtain the triangulation in  Figure \ref{fig:ex}C.
In step (4) we find $Q_2=B(T_c, b(u_1,u_2,u_4)) = \emptyset$.

We go back to Step (1) and begin the second iteration of the algorithm. Since $Q_1\neq \emptyset$, we move to Step (2).  Here $j=1$ is again the largest index for which $Q_j\neq \emptyset$. We set $v_3=u_3$, $T_c=T_c(b(v_1,v_2),v_3)$, and $Q_1=Q_1\setminus \{v_3\}=\emptyset$. In Step (3), we find some $i\in [4]\setminus \{f(v_2)=1,f(v_3)=2\}$ and $\sigma \subset \supp(b(v_1,v_2,v_3))$ such that $b(v_1,v_2,v_3)\in A^i_\sigma$. We set $f(b(v_1,v_2,v_3)=i$. Say  $i=4$. We obtain the triangulation $T_c$ in  Figure \ref{fig:ex}D.
In Step (4), $Q_2=\emptyset$.

Now we go back to Step (1). The algorithm terminates since $Q_j=\emptyset$ for all $j$.
In the returning triangulation $T'$ every bad edge was bad already in the triangulation $T$ we started with. Moreover, the edge $u_1u_2$ that was bad  in $T$, is not an edge anymore in $T'$. 

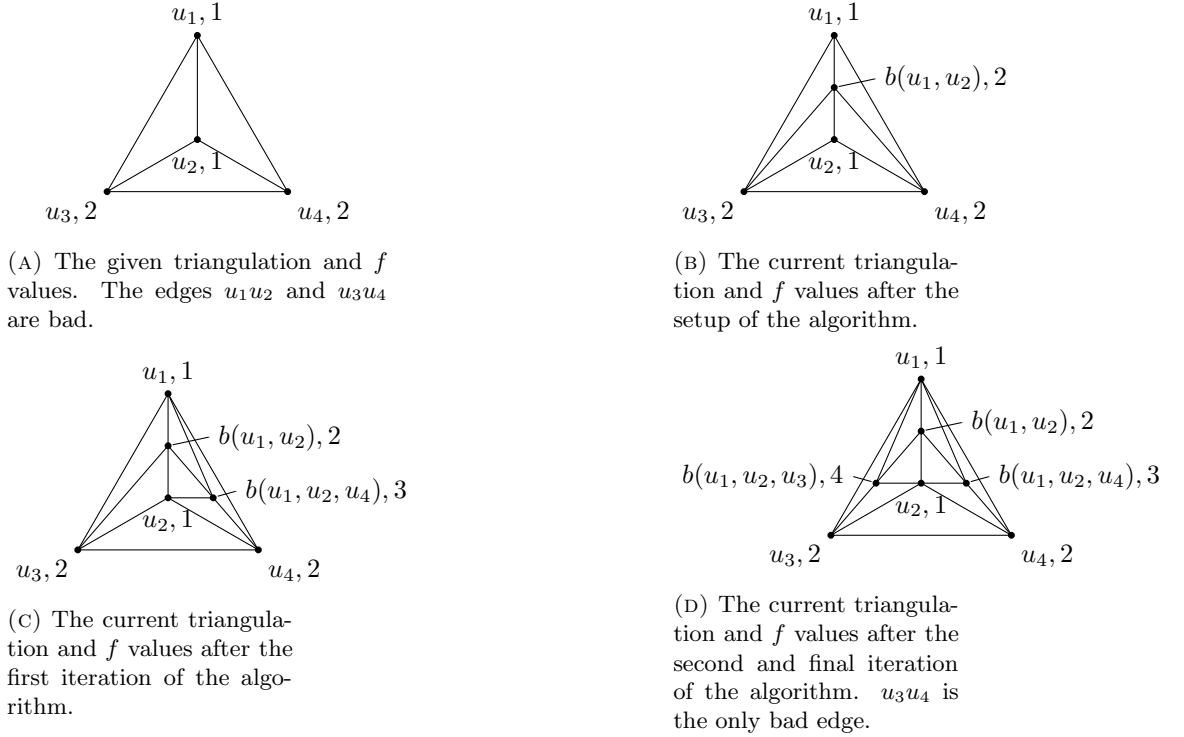
\begin{figure}[h]
    \centering
 \begin{subfigure}{0.4\textwidth}
         \centering
    \begin{tikzpicture}[scale=.6]
    \coordinate (a) at (-2,0);
    \coordinate (b) at (2,0);
    \coordinate (c) at (0,3.464);
    \coordinate (d) at (0,1.155);
     \fill[black, draw=black, thick] (a) circle (1.5pt) node[black, below left] {$u_3,2$};
    \fill[black, draw=black, thick] (b) circle (1.5pt) node[black, below right] {$u_4,2$};
    \fill[black, draw=black, thick] (c) circle (1.5pt) node[black, above] {$u_1,1$};
    \fill[black, draw=black, thick] (d) circle (1.5pt) node[black, below] {};
    \fill[black, draw=black, thick] (0,1.1) circle (0pt) node[black, below] {$u_2,1$};
    \draw (a) -- (b);
    \draw (a) -- (c);
    \draw (c) -- (b);
    \draw (a) -- (d);
    \draw (d) -- (c);
    \draw (d) -- (b);
    \end{tikzpicture}
        \caption{The given triangulation and $f$ values. The edges $u_1u_2$ and $u_3u_4$ are bad.}\label{1A}
     \end{subfigure}
     \hfill
         \begin{subfigure}{0.3\textwidth}
         \centering
    \begin{tikzpicture}[scale=.6]
    \coordinate (a) at (-2,0);
    \coordinate (b) at (2,0);
    \coordinate (c) at (0,3.464);
    \coordinate (d) at (0,1.155);
    \coordinate (e) at (0,2.3095);
    
    \fill[black, draw=black, thick] (a) circle (1.5pt) node[black, below left] {$u_3,2$};
    \fill[black, draw=black, thick] (b) circle (1.5pt) node[black, below right] {$u_4,2$};
    \fill[black, draw=black, thick] (c) circle (1.5pt) node[black, above] {$u_1,1$};
    \fill[black, draw=black, thick] (d) circle (1.5pt) node[black, below] {};
    \fill[black, draw=black, thick] (0,1.1) circle (0pt) node[black, below] {$u_2,1$};
    \fill[black, draw=black, thick] (e) circle (1.5pt) node[black, above right] {};
    \fill[black, draw=black, thick] (.9,2.5) circle (0pt) node[black, right] {$b(u_1,u_2),2$};
    
    \draw (.9,2.5) -- (0.1,2.33);
    
    \draw (a) -- (b);
    \draw (a) -- (c);
    \draw (c) -- (b);
    
    \draw (a) -- (d);
    \draw (d) -- (c);
    \draw (d) -- (b);
    
    \draw (e) -- (a);
    \draw (e) -- (b);
    \end{tikzpicture}
     \caption{The current triangulation and $f$ values after the setup of the algorithm.}
     \end{subfigure}
         \begin{subfigure}{0.3\textwidth}
    \begin{tikzpicture}[scale=.6]
    \coordinate (a) at (-2,0);
    \coordinate (b) at (2,0);
    \coordinate (c) at (0,3.464);
    \coordinate (d) at (0,1.155);
    \coordinate (e) at (0,2.3095);
    \coordinate (f) at (1,1.155);
    
    \fill[black, draw=black, thick] (a) circle (1.5pt) node[black, below left] {$u_3,2$};
    \fill[black, draw=black, thick] (b) circle (1.5pt) node[black, below right] {$u_4,2$};
    \fill[black, draw=black, thick] (c) circle (1.5pt) node[black, above] {$u_1,1$};
    \fill[black, draw=black, thick] (d) circle (1.5pt) node[black, below] {};
    \fill[black, draw=black, thick] (0,1.1) circle (0pt) node[black, below] {$u_2,1$};
    \fill[black, draw=black, thick] (e) circle (1.5pt) node[black, above right] {};
    \fill[black, draw=black, thick] (.9,2.5) circle (0pt) node[black, right] {$b(u_1,u_2),2$};
    \fill[black, draw=black, thick] (f) circle (1.5pt) node[black, above right] {};
    \fill[black, draw=black, thick] (1.5,1.3) circle (0pt) node[black, right] {$b(u_1,u_2,u_4),3$};
    
    \draw (.9,2.5) -- (0.1,2.33);
    \draw (1.5,1.3) -- (1.1,1.19);

    \draw (a) -- (b);
    \draw (a) -- (c);
    \draw (c) -- (b);
    
    \draw (a) -- (d);
    \draw (d) -- (c);
    \draw (d) -- (b);
    
    \draw (e) -- (a);
    \draw (e) -- (b);
    
    \draw (f) -- (c);
    \draw (f) -- (d);
    \end{tikzpicture}
    \caption{The current triangulation and $f$ values after the first iteration of the algorithm.}
     \end{subfigure}
     \hfill
         \begin{subfigure}{0.3\textwidth}
         \centering
    \begin{tikzpicture}[scale=.6]
    \coordinate (a) at (-2,0);
    \coordinate (b) at (2,0);
    \coordinate (c) at (0,3.464);
    \coordinate (d) at (0,1.155);
    \coordinate (e) at (0,2.3095);
    \coordinate (f) at (1,1.155);
    \coordinate (g) at (-1,1.155);
    
    \fill[black, draw=black, thick] (a) circle (1.5pt) node[black, below left] {$u_3,2$};
    \fill[black, draw=black, thick] (b) circle (1.5pt) node[black, below right] {$u_4,2$};
    \fill[black, draw=black, thick] (c) circle (1.5pt) node[black, above] {$u_1,1$};
    \fill[black, draw=black, thick] (d) circle (1.5pt) node[black, below] {};
    \fill[black, draw=black, thick] (0,1.1) circle (0pt) node[black, below] {$u_2,1$};
    \fill[black, draw=black, thick] (e) circle (1.5pt) node[black, above right] {};
    \fill[black, draw=black, thick] (.9,2.5) circle (0pt) node[black, right] {$b(u_1,u_2),2$};
    \fill[black, draw=black, thick] (f) circle (1.5pt) node[black, above right] {};
    \fill[black, draw=black, thick] (1.5,1.3) circle (0pt) node[black, right] {$b(u_1,u_2,u_4),3$};
    \fill[black, draw=black, thick] (g) circle (1.5pt) node[black, left] {};
    \fill[black, draw=black, thick] (-1.5,1.3) circle (0pt) node[black, left] {$b(u_1,u_2,u_3),4$};
    
    \draw (.9,2.5) -- (0.1,2.33);
    \draw (1.5,1.3) -- (1.1,1.19);
    \draw (-1.5,1.3) -- (-1.1,1.19);

    \draw (a) -- (b);
    \draw (a) -- (c);
    \draw (c) -- (b);
    
    \draw (a) -- (d);
    \draw (d) -- (c);
    \draw (d) -- (b);
    
    \draw (e) -- (a);
    \draw (e) -- (b);
    
    \draw (f) -- (c);
    \draw (f) -- (d);
    
    \draw (g) -- (c);
    \draw (g) -- (d);
    \end{tikzpicture}
    \caption{The current triangulation and $f$ values after the second and final iteration of the algorithm. $u_3u_4$ is the only bad edge.}
         \end{subfigure}
  
\caption{The algorithm applied to a triangulation of a 2-simplex in Example \ref{exalgorithm}.}
\label{fig:ex}
\end{figure}

\end{example}

\section{Proof of Theorem \ref{thm:main}}
The proof follows  from Theorem \ref{goodtriangulation} using an argument similar to the one used to prove Theorem 2.1 in   \cite{floriancolorful2019}.
By Theorem \ref{goodtriangulation}, for every  $\varepsilon >0$
there exists a  triangulation $T_\varepsilon$ of $P$ with diameter at most $\varepsilon$ and assignments $\lambda(v)$, $f(v)$,  $y(v)$,  such that the following properties hold:
\begin{enumerate}
    \item for every $v\in V(T_\varepsilon)$ we have $v\in A_{\lambda(v)}^{f(v)}$
    and $y(v)=y_{\lambda(v)}^{f(v)}$,
    \item $\lambda(v)\subset \supp(v)$, and
    \item for every face $\tau$ of $T_\epsilon$, the indices $(f(u)\mid u\in V(\tau))$ are pairwise distinct.
\end{enumerate}

 By Theorem \ref{sperner-shapley} there is a face $\sigma=\conv\{u_1,\dots,u_k\}$ in $T_\varepsilon$ (of dimension $k-1$, without loss of generality) 
 such that 
 $
 p\in  \conv\{y(u) \mid  u\in V(\sigma)\}.$
 This implies that 
 for $\tau_i=\lambda(u_i)$ and $j_i=f(u_i)$, we have $p\in  \conv\{y_{\tau_i}^{j_i} \mid 1\leq i\leq k\}.$ Moreover, since $u_i \in A_{\sigma_1}^{j_1}$ and the diameter of $\sigma$ is at most $\varepsilon$, the $\varepsilon$-neighborhoods of $A_{\sigma_1}^{j_1},\dots, A_{\sigma_k}^{j_k}$ intersect. Let $\pi_\varepsilon(i): [k]\to [n]$ be the injection $i\mapsto j_i$.
 
 Now, taking  $\varepsilon$ to $0$ and using the compactness of $P$ and the fact that the sets $A_{\tau}^{i}$ are closed, we conclude the theorem. \qed

\section{Applications: proofs of Theorems \ref{thm:cakes} and \ref{thm:sparse colorful-intervals}}

The proof of Theorem \ref{thm:sparse colorful-intervals} is very similar to the proof  of Theorem \ref{thm:colorful-intervals}, where the role of Theorem \ref{frick-zerbib} is replaced by Theorem \ref{thm:main}. 

A {\em fractional matching} in a hypergraph $H=(V,E)$ is a function $f\colon E \to \mathbb{R}_{\ge 0}$ satisfying $\sum_{e:~ e\ni v} f(e)\le 1$ 
for all~${v\in V}$.  The \emph{fractional matching number} $\nu^*(H)$ is the maximum of $\sum_{e\in E} f(e)$ over all fractional matchings $f$ of $H$.
A {\em perfect fractional matching} in $H$ is a fractional matching $f$ in which $\sum_{e:v\in e} f(e) = 1$ for every $v\in V$. 
The {\em rank} of a hypergraph $H=(V,E)$ is the maximal size of an edge in $H$. A hypergraph $H$ is $d$-partite if there exists a partition $V_1,\dots, V_d$ of $V$ such that $|e\cap V_i| =1$ for every $e\in E$ and $i\in [d]$. 

For the proof of Theorem \ref{thm:sparse colorful-intervals} we will use a  theorem by F\"uredi \cite{furedi}.
\begin{theorem}[F\"uredi \cite{furedi}]\label{furedi}
If $H$ is a hypergraph of rank $d\ge 2$, then
$\nu(H) \ge \frac{\nu^*(H)}{d-1+\frac{1}{d}}.$ If $H$ is $d$-partite, then $\nu(H) \ge \frac{\nu^*(H)}{d-1}.$  
\end{theorem}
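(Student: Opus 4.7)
Füredi's theorem is a classical linear-programming inequality for fractional matchings, so my plan would be to attack it through LP duality and extreme-point analysis of the fractional matching polytope, rather than through the topological, KKM-style tools developed elsewhere in this paper.

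First I would pass to a maximum fractional matching $f^*$ of $H$ and restrict attention to the subhypergraph $H^*$ whose edges are those with $f^*(e) > 0$. On the vertex set saturated by $f^*$, this induces a fractional perfect matching, and by LP duality there is a matching fractional vertex cover of total weight $\nu^*(H)$. One may moreover choose $f^*$ to be a vertex of the fractional matching polytope, in which case a standard linear-algebra argument yields $|E(H^*)| \le |V(H^*)| \le d \sum_e f^*(e) = d\,\nu^*(H)$, so the support is very restricted.

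Next I would extract an integer matching by an iterative greedy-with-charging scheme. In the $d$-partite case, the bipartite incidence structure between edges of $H^*$ and vertices of $H^*$ lets one repeatedly pick an edge $e$ and delete it together with all edges meeting it; since $H$ is $d$-partite, removing $e$ destroys fractional weight at most $d - 1$ (one unit per vertex other than $e$'s own contribution), yielding $\nu(H) \ge \nu^*(H)/(d-1)$. For the general rank-$d$ case, I would refine the charging by separately tracking vertices lying in only a single edge of the support, and use the extreme-point inequality $|E(H^*)| \le |V(H^*)|$ to extract an additional $1/d$ of fractional weight per removed edge, pushing the denominator to $d - 1 + 1/d$.

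The main obstacle will be the $+1/d$ improvement in the non-partite case: the partite bound essentially follows from a greedy argument, but the sharper constant requires a delicate accounting of low-degree vertices and a potential-function argument ensuring that the extra saving is realized at \emph{every} step of the greedy process, even after the hypergraph has been partially peeled. I expect this is where the bulk of the technical work lies, and it is the part of Füredi's original proof that is genuinely non-routine.
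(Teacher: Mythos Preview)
The paper does not prove this statement at all: Theorem~\ref{furedi} is quoted from F\"uredi's 1981 paper and used purely as a black box in the proofs of Theorems~\ref{thm:sparse colorful-intervals} and~\ref{thm:cakes}. There is therefore no ``paper's own proof'' to compare your proposal against.

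As for the proposal itself, what you have written is a plausible high-level outline in the right spirit (LP duality, extremal fractional matchings, greedy extraction with a charging argument), but it is not a proof. In particular, your account of the $d$-partite case is already imprecise: removing an edge $e$ and all edges meeting it can destroy fractional weight up to $d$, not $d-1$, since each of the $d$ vertices of $e$ carries total incident weight at most $1$; getting the bound down to $d-1$ requires an additional idea (in F\"uredi's argument, a kernel/stable-set structure in an auxiliary digraph, or equivalently a careful ordering so that one vertex of each chosen edge contributes nothing to the loss). Your description of the general rank-$d$ case is even more schematic: the inequality $|E(H^*)|\le |V(H^*)|$ for a basic feasible solution is correct, but you have not explained how it yields the extra $1/d$ per step, and your remark that this is ``genuinely non-routine'' is an acknowledgment that the key mechanism is missing. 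If you actually need to prove this theorem rather than cite it, you should consult F\"uredi's original argument; the present sketch would not be accepted as a proof.
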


We will also need the following simple  lemma (see e.g. \cite{floriancolorful2019}). 
\begin{lemma}\label{rank}
If a hypergraph $H=(V,E)$ of rank $d$ has a perfect fractional matching, then $\nu^*(H)\ge \frac{|V|}{d}$.  
\end{lemma}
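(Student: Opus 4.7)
The plan is to exploit the defining identity of a perfect fractional matching, namely $\sum_{e \ni v} f(e) = 1$ for every $v \in V$, and then double-count the quantity $\sum_{v \in V} \sum_{e \ni v} f(e)$ in two different ways. On the one hand, summing the vertex constraint over all $v \in V$ immediately gives $\sum_{v \in V} \sum_{e \ni v} f(e) = |V|$. On the other hand, swapping the order of summation rewrites the same quantity as $\sum_{e \in E} |e| f(e)$.

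Since $H$ has rank $d$, every edge satisfies $|e| \le d$, and $f(e) \ge 0$, so
\[
|V| \;=\; \sum_{e \in E} |e|\, f(e) \;\le\; d \sum_{e \in E} f(e).
\]
Dividing by $d$ yields $\sum_{e \in E} f(e) \ge |V|/d$. Finally, any perfect fractional matching is in particular a fractional matching (the equality $\sum_{e \ni v} f(e) = 1$ implies the inequality $\le 1$), so $f$ is a valid candidate in the definition of $\nu^*(H)$, and therefore $\nu^*(H) \ge \sum_{e \in E} f(e) \ge |V|/d$, which is what we wanted.

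There is no real obstacle here; the lemma is essentially a one-line double-counting argument, and the only thing to be slightly careful about is to explicitly observe that the perfect fractional matching condition is a strengthening of the fractional matching condition, so that the witness $f$ can indeed be plugged into the definition of $\nu^*$.
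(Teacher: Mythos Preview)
Your proof is correct. The paper itself does not include a proof of this lemma, referring instead to \cite{floriancolorful2019} and calling it ``simple''; your double-counting argument is exactly the standard one-line justification one would expect here.
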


\begin{proof}[Proof of Theorem \ref{thm:sparse colorful-intervals}]
For a point 
$x=(x_1,\dots,x_k) \in \Delta^{k-1}$ let  
$p_x(j)=\sum_{i=1}^j x_i \in [0,1]$.
Since $\F=\bigcup_{i=1}^n \F_i$ is finite, by rescaling $\mathbb{R}$ we may assume that each member of
$\F$ is a subset of $(0,1)$. 
Every face of $\Delta^{k-1}$ is of the form $\Delta^T=\conv \{e_j \mid j\in T\}$ for some $T\subset [k]$, where $e_j$ is the vertex of $\Delta^{k-1}$  having 1 at the $j$-component and 0 otherwise. 

For every $T\subset [k]$, let $A^i_T$ be the set consisting of all $x \in
\Delta^{k-1}$ for which there exists a $d$-interval $f \in \F_i $ satisfying: 
\begin{enumerate}
\item[(a)] $f\subset \bigcup_{j\in T} (p_{x}({j-1}),p_{x}({j}))$, and 
\item[(b)] $f \cap
(p_{x}({j-1}),p_{x}({j})) \neq \emptyset$ for each $j
\in T$.
\end{enumerate}  
Note that $A^i_T =
\emptyset$ whenever $|T| > d$, 
and that the sets $A^i_{T}$ are open.

Let $I\in {[n] \choose n-k+1}$.  The assumption $\tau(\bigcup_{i\in I} F_i )\ge k$ implies that for every $x=(x_1,\dots,x_{k}) \in \Delta^{k-1}$, the set $P(x)=\{p_{x}({j}) \::\: j\in [k-1]\}$ is not a cover of $\bigcup_{i\in I} F_i$,  
meaning that there exists $i\in I$ and $f\in \F_i $ not
containing any $p_{x}(j)$. This, in turn, means that $x
\in A^i_{T}$ for some $T \subset [k]$. Thus the  sets $(A^i_T \mid i\in I, T\subset [k])$  form a cover of $\Delta^{k-1}$.
  To show that this is a $k$-weakly Komiya cover, let $\Delta^S$ be a face of $\Delta^{k-1}$ for some $S\subset [k]$. If $x\in \Delta^S$ then
$(p_{x}({j-1}),p_{x}({j}))=\emptyset$ for $j \notin S$,
and hence it is impossible to have ${f \cap (p_{x}({j-1}),p_{x}({j}))\neq \emptyset}$. 
Thus  $x \in A^i_{T}$
for some $T \subseteq S$. This proves that $\Delta^S \subseteq \bigcup_{T
\subseteq S} \bigcup_{i\in I} A^i_{T}$, and thus $(\bigcup_{i\in I} A^i_{T} \mid T\subset [k])$ is a Komiya cover, for every $I\in {[n] \choose n-k+1}$.

By Theorem \ref{thm:main} 
there is an injection $\pi: [k]\rightarrow [n]$ and faces $\Delta^{T_1},\dots,\Delta^{T_k}$ of $\Delta^{k-1}$ such that 
\begin{enumerate}
    \item  $b(\Delta^{k-1}) \in  \conv\{b(\Delta^{T_1}),\dots,b(\Delta^{T_k})\}$, and
    \item  $\bigcap_{i=1}^k A^{\pi(i)}_{T_{i}}\neq \emptyset$. 
\end{enumerate}

Note that (1) implies that 
 the hypergraph $H=([k], \{T_{1},\dots,T_{k}\})$ has a perfect fractional matching, and (2) implies that 
$|T_i|\le d$ for all $i \in [k]$. 

Then by the observation mentioned above $\nu^*(H) \ge
\frac{k}{d}$. Therefore, by Theorem \ref{furedi}, $\nu(H) \ge
\frac{\nu^*(H)}{d-1+\frac{1}{d}}\ge \frac{k}{d^2-d+1}.$ 

Let $M$ be a matching in $H$ of size  $m \ge \frac{k}{d^2-d+1}$.
Let $x \in \bigcap_{i=1}^{k} A^{\pi(i)}_{T_i}$. For every $i\in [k]$ let $f_i$ be the $d$-interval of $\F_{\pi(i)} $ witnessing the fact that
$x \in A^{\pi(i)}_{T_i}$. Then the set $\M=\{f_i\mid T_i \in M\}$ is a colorful matching of
size $m$ in $\F$. This proves the first assertion of the theorem.

Now suppose that $\F_i $ is a hypergraph of separated $d$-intervals for all $i\in [n]$. 
For $f \in \F$ let $f^t \subset (t-1,t)$ be the $t$-th interval component of~$f$.
We can assume without loss of generality that $f^t$ is nonempty.  
Let $P=(\Delta^{m-1})^d$. Then $\dim P = (m-1)d$. For a $d$-tuple $T=(j_i,\dots,j_d) \in [m]^d$ (note that $T$ corresponds to the vertex $v_T=e_{j_1}\times e_{j_2} \times \cdots \times e_{j_d}$ of $P$) let $A^i_T$ consist of all $x=x^1\times\cdots\times x^d \in
P$ for which there exists $f \in \F_i $ satisfying 
 $f^t\subset (t-1+p_{x^t}({j_t-1}),t-1+p_{x^t}({j_t}))$ for all $t\in[d]$.

Let $I\in {[n] \choose n-d(m-1)}$. Since $\tau(\bigcup_{i\in I} \F_{i}) \ge d(m-1)+1$, the set of points $$\{t-1+p_{x^t}(j)\mid t\in[d], j\in [m-1]\}$$ do not form a cover of $\bigcup_{i\in I} \F_{i}$. Therefore,  
by the same argument as before, the sets $(A^i_{T} \mid i\in[n], T\in [m]^d) $ are open and form a $(d(m-1)+1)$-weakly Komiya cover of $P$.

Applying Theorem \ref{thm:main} with $k=d(m-1)+1$,
 we conclude that there exists an injection $\pi:[d(m-1)+1]\to [n]$ and $d$-tuples
 $T_1,\dots,T_{d(m-1)+1} $ in $[m]^d$ such that 
 \begin{enumerate}
     \item $b(P) = \conv\{v_{T_1},\dots,v_{T_k}\} $, and
     \item $\bigcap_{i\in [(m-1)d+1]} A^{\pi(i)}_{T_i} \neq \emptyset$.
 \end{enumerate}
Observe that (1) implies that the $d$-partite hypergraph  $$([m]\times [d] ,\{T_1,\dots,T_{d(m-1)+1} \})$$ (where an edge $T_i$ contains a vertex $(i,j)$ if $T_i$ has $i$  in its $j$-th entry) has a  perfect fractional matching. Hence by Lemma \ref{rank} we have $\nu^*(H) \ge md/d =m$. By Theorem \ref{furedi}, this implies $\nu(H) \ge
\frac{\nu^*(H)}{d-1}\ge \frac{m}{d-1}
.$ Now, by the same argument as before, by taking $x \in \bigcap_{i\in [(m-1)d+1]} A^{\pi(i)}_{T_i}$ we obtain a colorful matching in $\F$ of the same size, concluding the proof of the theorem.
\end{proof}

\begin{proof}[Proof of Theorem \ref{thm:cakes}]
Given $d$ cakes, the set of all possible partitions of the cakes in $m$ interval pieces is modeled by the polytope  
$P=(\Delta^{m-1})^d$  of dimension  $(m-1)d$ (see e.g. \cite{NSZ, ABBSZ}). For a $d$-tuple $T=(j_i,\dots,j_d) \in [m]^d$ define $$A^i_T=\{x=x^1\times\cdots\times x^d \in
P \mid \text{Player } i \text{ prefers the } d\text{-tuple of pieces }T\}.$$ The fact that the players are $(m,d)$ hungry implies that the sets $(A^i_T \mid i\in [n], T\in [m]^d)$ form a $(d(m-1)+1)$-weakly Komiya cover of $P$.
Applying Theorem \ref{thm:main} with $k=d(m-1)+1$ as in the previous proof,
 we conclude that there exists an injection $\pi:[d(m-1)+1]\to [n]$ and $d$-tuples
 $T_1,\dots,T_{d(m-1)+1} $   in $[m]^d$ such that 
 \begin{enumerate}
     \item $b(P) = \conv\{v_{T_1},\dots,v_{T_k}\} $, and
     \item $\bigcap_{i\in [(m-1)d+1]} A^{\pi(i)}_{T_i} \neq \emptyset$.
 \end{enumerate}

Like before, (1) implies that the $d$-partite hypergraph  $$H=([m]\times [d] ,\{T_1,\dots,T_{d(m-1)+1} \})$$ has a  perfect fractional matching. Hence by Lemma \ref{rank} we have $\nu^*(H) \ge md/d =m$. By Theorem \ref{furedi}, this implies $\nu(H) \ge
\frac{\nu^*(H)}{d-1}\ge \frac{m}{d-1}
.$ 

Let $M$ be a matching of size at least $\frac{m}{d-1}$ in $H$.
Consider the partition $x \in \bigcap_{i\in [(m-1)d+1]} A^{\pi(i)}_{T_i}$. Then players $\{\pi(i) \mid T_i\in M\}$ prefer in the partition $x$ pairwise disjoint d-tuples of pieces, as needed.   
\end{proof}


\end{document}